\DeclareMathAlphabet{\mathcalligra}{T1}{calligra}{m}{n}
\numberwithin{equation}{section}
\theoremstyle{plain}
\newtheorem{theorem}{Theorem}[section]
\newtheorem{lemma}[theorem]{Lemma}
\theoremstyle{definition}
\newtheorem{definition}[theorem]{Definition}
\newtheorem*{condition-S}{Condition}
\theoremstyle{remark}
\newtheorem{remark}[theorem]{Remark}
\newtheorem{example}[theorem]{Example}
\newcommand{\C}{\mathbb{C}}
\newcommand{\f}{\varphi}
\newcommand{\p}{\psi}
\newcommand{\e}{\varepsilon}
\begin{document}

\title{Deformations of Legendrian Curves}
\author{Marco Silva Mendes}
\author{Orlando Neto}
\date{\today}

\begin{abstract}
We construct versal and equimultiple versal deformations of the parametrization of a Legendrian curve.
\end{abstract}

\maketitle

\pagenumbering{arabic}




\section{Contact Geometry}\label{S2}
Let $(X,\mathcal{O}_X)$ be a complex manifold of dimension $3$. A differential form $\omega$ of degree $1$ is said to be a \emph{contact form} if $\omega \wedge d\omega$ never vanishes. Let $\omega$ be a contact form. By Darboux's theorem for contact forms there is locally a system of coordinates $(x,y,p)$ such that $\omega=dy-pdx$.
If $\omega$ is a contact form and $f$ is a holomorphic function that never vanishes, $f\omega$ is also a contact form. We say that a locally free subsheaf $\mathcal L$ of $\Omega^1_X$ is a \emph{contact structure} on $X$ if $\mathcal L$ is locally generated by a contact form. If $\mathcal L$ is a contact structure on $X$ the pair $(X,\mathcal L)$ is called a \emph{contact manifold}. Let $(X_1,{\mathcal L}_1)$ and $(X_2,{\mathcal L}_2)$ be contact manifolds. Let $\chi: X_1 \to X_2$ be a holomorphic map. We say that $\chi$ is a \emph{contact transformation} if $\chi^\ast \omega$ is a local generator of ${\mathcal L}_1$ whenever $\omega$ is a local generator of ${\mathcal L}_2$.

 Let $\theta=\xi dx + \eta dy$ denote the canonical $1$-form of $T^\ast \C^2 = \C^2 \times \C^2$. Let $\pi : \mathbb{P}^\ast \C^2=\C^2 \times \mathbb{P}^1 \to \C^2$ be the \emph{projective cotangent bundle} of $\C^2$, where $\pi(x,y; \xi : \eta)=(x,y)$. Let $U\,[V]$ be the open subset of  $\mathbb{P}^\ast \C^2$ defined by $ \eta \neq 0\,[\xi \neq 0]$. Then $\theta / \eta\,[\theta/\xi]$ defines a contact form $dy-pdx\,[dx-qdy]$ on $U\,[V]$, where $p=- \xi / \eta\,[q=- \eta / \xi]$. Moreover, $dy-pdx$ and $dx-qdy$ define a structure of contact manifold on $\mathbb{P}^\ast \C^2$.
 
If $\Phi(x,y)=(a(x,y),b(x,y))$ with $a,b \in \C\{x,y\}$ is an automorphism of $(\C^2, (0,0))$, we associate to $\Phi$ the germ of contact transformation 
\[
\chi: (\mathbb{P}^\ast \C^2,(0,0;0:1)) \to \left(\mathbb{P}^\ast \C^2,(0,0; -\partial_x b(0,0) : \partial_x a(0,0)\right)
\]
 defined by 
 \begin{equation}\label{E:CANONICAL}
 \chi(x,y;\xi : \eta)=\left(a(x,y),b(x,y); \partial_y b \xi - \partial_x b\eta : - \partial_y a\xi + \partial_x a\eta\right).
 \end{equation}
  If $D \Phi_{(0,0)}$ leaves invariant $\{y=0\}$, then $\partial_x b(0,0)=0$, $\partial_x a(0,0) \neq 0 $ and $\chi(0,0;0:1)=(0,0;0:1)$. Moreover,
 \[
 \chi(x,y,p) = \left(a(x,y),b(x,y),  (\partial_y bp + \partial_x b) / (\partial_y ap + \partial_x a)\right).
 \]
 Let $(X,\mathcal L)$ be a contact manifold. A curve $L$ in $X$ is called \emph{Legendrian} if $\omega |_{L}=0$ for each section $\omega$ of $\mathcal L$.

 Let $Z$ be the germ at $(0,0)$ of an irreducible plane curve parametrized by
 \begin{equation}\label{E:PARPLANE}
 \f(t)=(x(t),y(t)).
 \end{equation}
  We define the \emph{conormal}  of $Z$ as the curve parametrized by 
  \begin{equation}\label{E:CONORMAL1}
  \p(t)=(x(t),y(t); -y'(t) : x'(t)).
  \end{equation}
  
  The conormal of $Z$ is the germ of a Legendrian curve of $\mathbb{P}^\ast \C^2$.
  
  We will denote the conormal of $Z$ by    $\mathbb{P}_Z^\ast \C^2$ and the parametrization (\ref{E:CONORMAL1}) by $\mathcal{C}on \, \f $. 
  
  Assume that the tangent cone $C(Z)$ is defined by the equation $ax+by=0$, with $(a,b) \neq (0,0)$. Then $\mathbb{P}_Z^\ast \C^2$ is a germ of a Legendrian curve at $(0,0;a:b)$. 
  
Let $f \in \C\{t\}$. We say the $f$ has order $k$ and write $ord \, f=k$ or  $ord_t \, f=k$ if $f / t^k$ is a unit of $\C\{t\}$. 

\begin{remark}\label{R:POSITION}
Let $Z$ be the plane curve parametrized by (\ref{E:PARPLANE}). Let $L=\mathbb{P}_Z^\ast \C^2$. Then:
\begin{enumerate}[\upshape (i)]
\item $C(Z)=\{y=0\}$ if and only if $ord \, y > ord \, x$.  If $C(Z)=\{y=0\}$, $L$ admits the parametrization
\[
\p(t)= (x(t),y(t), y'(t)/x'(t))
\]
on the chart $(x,y,p)$.
\item $C(Z)=\{y=0\}$ and $C(L)=\{x=y=0\}$ if and only if $ord \, x < ord \, y < 2ord \, x$.
\item $C(Z)=\{y=0\}$ and $ \{x=y=0\} \nsubseteq C(L) \subset \{y=0\}$ if and only if $ord \, y \geq 2ord \, x$.
\item $C(L )=\{y=p=0\}$ if and only if $ord \, y > 2ord \, x$.
\item $mult \, L \leq mult \, Z$. Moreover, $mult \, L=mult \, Z$ if and only if $ord \, y \geq 2ord \, x$.
\end{enumerate}
\end{remark}
 
If $L$ is the germ of a Legendrian curve at $(0,0;a:b)$, $\pi(L)$ is a germ of a plane curve of $(\C^2,(0,0))$. Notice that all branches of $\pi(L)$ have the same tangent cone.

If $Z$ is the germ of a plane curve with irreducible tangent cone, the union $L$ of the conormal of the branches of $Z$ is a germ of a Legendrian curve. We call $L$ the \emph{conormal} of $Z$.

If $C(Z)$ has several components, the union of the conormals of the branches of $Z$ is a union of several germs of Legendrian curves.

If $L$ is a germ of Legendrian curve, $L$ is the conormal of $\pi(L)$.

Consider in the vector space $\C^2$, with coordinates $x,p$, the symplectic form $dp \wedge dx$. We associate to each symplectic linear automorphism
\[
 (p,x) \mapsto (\alpha p + \beta x, \gamma p + \delta x)
 \]
of $\C^2$ the contact transformation
 \begin{equation}\label{E:PARAB}
(x,y,p)=(\gamma p + \delta x,y + \frac{1}{2} \alpha\gamma p^2 + \beta \gamma xp + \frac{1}{2}\beta \delta x^2,\alpha p + \beta x).
\end{equation}
 We call (\ref{E:PARAB}) a \emph{paraboloidal contact transformation}.

 In the case $\alpha=\delta=0$ and $\gamma= - \beta =1$ we get the so called \emph{Legendre} transformation
 \[
 \Psi(x,y,p)=(p,y-px,-x).
 \]
 
 We say that a germ of a Legendrian curve $L$ of $(\mathbb{P}^\ast \C^2,(0,0;a:b))$ is in \emph{generic position} if $C(L) \not\supset \pi^{-1}(0,0)$.

\begin{remark}\label{R: PREPARATION}
Let $L$ be the germ of a Legendrian curve on a contact manifold $(X,\mathcal L)$ at a point $o$. By the Darboux's theorem
for contact forms there is a germ of a contact transformation $\chi:(X,o) \to (U,(0,0,0))$, where $U=\{\eta \neq 0\}$ is the open subset of $\mathbb{P}^\ast \C^2$ considered above. Hence $C(\pi(\chi(L)))=\{y=0\}$. Applying a paraboloidal transformation to $\chi(L)$ we can assume that $\chi(L)$ is in generic position. If $C(L)$ is irreducible, we can assume $C(\chi(L))=\{y=p=0\}$.
\end{remark}

Following the above remark, from now on we will always assume that every Legendrian curve germ is embedded in $(\C^3_{(x,y,p)},\omega)$, where  $\omega=dy-pdx$.

\begin{example}
\begin{enumerate}
\item
The plane curve $Z=\{ y^2-x^3=0\}$ admits a parametrization $\f(t)=(t^2,t^3)$. The conormal $L$ of $Z$ admits the parametrization $\psi(t)=(t^2,t^3,\frac{3}{2}t)$. Hence $C(L)=\pi^{-1}(0,0)$ and $L$ is not in generic position. If $\chi$ is the Legendre transformation, $C(\chi(L))=\{y=p=0\}$ and $L$ is in generic position. Moreover, $\pi(\chi(L))$ is a smooth curve.
\item The plane curve $Z=\{ (y^2-x^3)(y^2-x^5)=0\}$ admits a parametrization given by 
\[
\f_1(t_1)=({t_1}^2,{t_1}^3), \quad
 \f_2(t_2)=({t_2}^2,{t_2}^5).
 \]
  The conormal $L$ of $Z$ admits the parametrization given by 
 \[
  \psi_1(t_1)=({t_1}^2,{t_1}^3,\frac{3}{2}t_1),\quad
   \psi_2(t_2)=({t_2}^2,{t_2}^5,\frac{5}{2}{t_2}^3).
 \]
 Hence $C(L_1)=\pi^{-1}(0,0)$ and $L$ is not in generic position.  If $\chi$ is the paraboloidal contact transformation
\[
\chi:(x,y,p) \mapsto (x+p,y+\frac{1}{2}p^2,p),
\]
then $\chi(L)$ has branches with parametrization given by 
 \begin{align*}
  \chi(\psi_1)(t_1)&=({t_1}^2+\frac{3}{2}t_1,{t_1}^3+\frac{9}{8}{t_1}^2,\frac{3}{2}t_1),\\
   \chi(\psi_2)(t_2)&=({t_2}^2+\frac{5}{2}{t_2}^3,{t_2}^5+\frac{25}{8}{t_2}^6,\frac{5}{2}{t_2}^3).
 \end{align*}
 Then
 \[
 C(\chi(L_1))=\{y=p-x=0\},\quad C(\chi(L_2))=\{y=p=0\}
 \]
 and $L$ is in generic position.
\end{enumerate}
\end{example}


\section{Relative Contact Geometry}\label{S3}

Set $\mathbf{x}=(x_1,\ldots,x_n)$ and $\mathbf{z}=(z_1,\ldots,z_m)$. Let $I$ be an ideal of the ring $\C\{\mathbf{z}\}$. Let $\widetilde{I}$ be the ideal of $\C\{\mathbf{x},\mathbf{z}\}$ generated by $I$.

\begin{lemma}\label{L:IDEAL}
\begin{enumerate}[(a)]
\item Let $f \in C\{\mathbf{x},\mathbf{z}\}$, $f=\sum_\alpha a_\alpha \mathbf{x}^\alpha$ with $a_\alpha \in \C\{\mathbf{z}\}$. Then $f \in \widetilde{I}$ if and only if $a_\alpha \in I$ for each $\alpha$.
\item If $f \in \widetilde{I}$, then $\partial_{x_i} f \in \widetilde{I}$ for $1 \leq i \leq n$.
\item Let $a_1,\ldots,a_{n-1} \in \C\{\mathbf{x},\mathbf{z}\}$. Let $b, \beta_0 \in \widetilde{I}$. Assume that $\partial_{x_n} \beta_0=0$. If $\beta$ is the solution of the Cauchy problem
\begin{equation}\label{E:CAPR}
\partial_{x_n} \beta - \sum_{i=1}^{n-1} a_i \partial _{x_i} \beta =b, \qquad \beta - \beta_0 \in \C\{\mathbf{x},\mathbf{z}\}x_n,
\end{equation}
then $\beta \in \widetilde{I}$.

\end{enumerate}
\end{lemma}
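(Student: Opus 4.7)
My plan is to prove the three parts in order, with (a) doing the structural work that (b) and (c) then use directly.

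For part (a), the non-trivial direction is that $f\in\widetilde{I}$ forces each $a_\alpha\in I$. Since $\C\{\mathbf{z}\}$ is Noetherian, I would pick finitely many generators $g_1,\ldots,g_r$ of $I$; these also generate $\widetilde{I}$ over $\C\{\mathbf{x},\mathbf{z}\}$. Writing $f=\sum_j g_j h_j$ and expanding $h_j=\sum_\alpha h_{j,\alpha}(\mathbf{z})\mathbf{x}^\alpha$, uniqueness of the $\mathbf{x}$-expansion forces $a_\alpha=\sum_j g_j h_{j,\alpha}\in I$. The converse is immediate because each $a_\alpha\mathbf{x}^\alpha$ lies in $\widetilde{I}$. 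Part (b) then falls out of (a): if every $a_\alpha\in I$, the coefficients of $\partial_{x_i}f$ are scalar multiples of certain $a_\alpha$'s and still lie in $I$, and a second application of (a) yields $\partial_{x_i}f\in\widetilde{I}$.

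For part (c), I would expand $\beta=\sum_\alpha \beta_\alpha(\mathbf{z})\mathbf{x}^\alpha$, $b=\sum_\alpha b_\alpha\mathbf{x}^\alpha$, $a_i=\sum_\gamma a_{i,\gamma}\mathbf{x}^\gamma$, and (using $\partial_{x_n}\beta_0=0$) $\beta_0=\sum_{\alpha_n=0}\beta_{0,\alpha}\mathbf{x}^\alpha$. Matching coefficients of $\mathbf{x}^\alpha$ on both sides of \eqref{E:CAPR} produces the recursion
\[
(\alpha_n+1)\,\beta_{\alpha+e_n}\;=\;b_\alpha\;+\;\sum_{i=1}^{n-1}\sum_{\substack{\gamma+\delta=\alpha+e_i\\ \delta_i\geq 1}} a_{i,\gamma}\,\delta_i\,\beta_\delta,
\]
where $e_i$ is the $i$-th standard basis multi-index. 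Since $i<n$, each $\delta$ in the inner sum satisfies $\delta_n=\alpha_n-\gamma_n\le\alpha_n$, strictly below the $n$-th component $\alpha_n+1$ of the unknown on the left. This makes an induction on $\alpha_n$ go through: the base case $\alpha_n=0$ is the initial condition $\beta_\alpha=\beta_{0,\alpha}\in I$ (part (a) applied to $\beta_0\in\widetilde{I}$), and the inductive step places $\beta_{\alpha+e_n}$ in $I$ because $b_\alpha\in I$ by (a), every $\beta_\delta$ on the right is in $I$ by hypothesis, and the scalar $\alpha_n+1$ is invertible in $\C$. A last appeal to (a) concludes that $\beta\in\widetilde{I}$.

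The main obstacle I anticipate is the bookkeeping in the recursion, specifically checking that the decisive inequality $\delta_n\le\alpha_n$ really does hold for every summand that appears; once this is in place, the induction on $\alpha_n$ is automatic. Existence and convergence of $\beta$ is a standard Cauchy--Kovalevskaya matter that is effectively built into the statement, so I would only need to propagate membership in $I$ through the coefficient recursion rather than deal with analytic estimates.
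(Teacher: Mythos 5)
Your proof is correct, but part (c) takes a genuinely different route from the paper's. Parts (a) and (b) coincide with the paper's argument (generators of $I$, uniqueness of the $\mathbf{x}$-expansion, and differentiation acting coefficientwise). For (c), however, the paper does not expand in power series at all: it rectifies the vector field $\partial_{x_n} - \sum_{i=1}^{n-1} a_i \partial_{x_i}$ by an analytic change of the $\mathbf{x}$-coordinates (fixing $\mathbf{z}$), which reduces \eqref{E:CAPR} to the trivial problem $\partial_{x_n}\beta = b$, $\beta - \beta_0 \in \C\{\mathbf{x},\mathbf{z}\}x_n$, whose solution is obtained by integrating in $x_n$; membership in $\widetilde{I}$ then follows from (a). That argument is shorter but leans on the flow-box theorem and on the tacit observation that a coordinate change fixing $\mathbf{z}$ preserves $\widetilde{I}$ (because the generators $g_i$ depend only on $\mathbf{z}$). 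Your coefficient recursion
\[
(\alpha_n+1)\,\beta_{\alpha+e_n} = b_\alpha + \sum_{i=1}^{n-1}\ \sum_{\substack{\gamma+\delta=\alpha+e_i\\ \delta_i\geq 1}} a_{i,\gamma}\,\delta_i\,\beta_\delta
\]
is correctly derived, and the decisive inequality does hold: for $i<n$ one has $(\alpha+e_i)_n=\alpha_n$, so $\delta_n\leq\alpha_n$, and the strong induction on the $x_n$-degree propagates membership in $I$ (each summand lies in $I$ since $I$ is an ideal, the sum is finite for fixed $\alpha$, and $\alpha_n+1$ is invertible). What your approach buys is self-containedness: it is pure power-series algebra, needs no rectification theorem, and makes the propagation of ideal membership completely explicit; the cost is the combinatorial bookkeeping, which you handle correctly. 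One shared caveat: in the easy direction of (a), both you and the paper write $a_\alpha=\sum_i h_{i,\alpha}g_i$ and regroup into $f=\sum_i\bigl(\sum_\alpha h_{i,\alpha}\mathbf{x}^\alpha\bigr)g_i$ without justifying that the $h_{i,\alpha}$ can be chosen so that $\sum_\alpha h_{i,\alpha}\mathbf{x}^\alpha$ is convergent; this needs a division argument with estimates (or flatness of $\C\{\mathbf{x},\mathbf{z}\}$ over $\C\{\mathbf{z}\}$), so you are no worse off than the paper on this point.
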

\begin{proof}
There are $g_1,\ldots,g_{\ell} \in \C\{\mathbf{z}\}$ such that $I=(g_1,\ldots,g_{\ell})$. If $a_\alpha \in I$ for each $\alpha$, there are $h_{i,\alpha} \in \C\{\mathbf{z}\}$ such that $a_\alpha=\sum_{i=1}^{\ell} h_{i,\alpha}g_i$. Hence
$
f=\sum_{i=1}^{\ell}(\sum_\alpha h_{i,\alpha} \mathbf{x}^\alpha)g_i \in \widetilde{I}.
$

If $f\in \widetilde{I}$, there are $H_i \in \C\{\mathbf{x},\mathbf{z}\}$ such that $f=\sum_{i=1}^{\ell} H_i g_i$. There are $b_{i,\alpha} \in \C\{\mathbf{z}\}$ such that $H_i = \sum_\alpha b_{i,\alpha} \mathbf{x}^\alpha$. Therefore $a_\alpha = \sum_{i=1}^{\ell} b_{i,\alpha} g_i \in I$.

We can perform a change of variables that rectifies the vector field $\partial_{x_n} - \sum_{i=1}^{n-1} a_i \partial _{x_i}$, reducing the Cauchy problem (\ref{E:CAPR}) to the Cauchy problem
\[
\partial_{x_n} \beta =b, \qquad  \beta - \beta_0 \in \C\{\mathbf{x},\mathbf{z}\}x_n.
\]
Hence statements $(b)$ and $(c)$ follow from $(a)$.
\end{proof}

Let $J$ be an ideal of $\C\{\mathbf{z}\}$ contained in $I$. Let $X,S$ and $T$ be analytic spaces with local rings $\C\{\mathbf{x}\}, \C\{\mathbf{z}\}/I$ and $\C\{\mathbf{z}\}/J$. Hence $X\times S$ and $X\times T$ have local rings $\mathcal{O}:=\C\{\mathbf{x},\mathbf{z}\}/\widetilde{I}$ and $\widetilde{\mathcal{O}}:= \C\{\mathbf{x},\mathbf{z}\}/\widetilde{J}$.
Let $\mathbf{a_1},\ldots,\mathbf{a_{n-1}},\mathbf{b} \in \mathcal{O}$ and $\mathbf{g} \in \mathcal{O}/x_n\mathcal{O}$. Let $a_i, b \in \widetilde{\mathcal{O}}$ and $g \in \widetilde{\mathcal{O}}/x_n\widetilde{\mathcal{O}}$ be representatives of $\mathbf{a_i}, \mathbf{b}$ and $\mathbf{g}$. Consider the Cauchy problems
\begin{equation}\label{E:CKU}
\partial_{x_n} f + \sum_{i=1}^{n-1} a_i \partial _{x_i} f =b, \qquad f + x_n\widetilde{\mathcal{O}}=g
\end{equation}
and
\begin{equation}\label{E:CKD}
\partial_{x_n} \mathbf{f }+ \sum_{i=1}^{n-1} \mathbf{a_i} \partial _{x_i} \mathbf{f} =\mathbf{b}, \qquad \mathbf{f} + x_n\mathcal{O}=\mathbf{g}.
\end{equation}

\begin{theorem}\label{T:CKT}
\begin{enumerate}[(a)]
\item There is one and only one solution of the Cauchy problem (\ref{E:CKU}).
\item If $f$ is a solution of (\ref{E:CKU}), $\mathbf{f}=f + \widetilde{I}$ is a solution of (\ref{E:CKD}).
\item If $\, \mathbf{f}$ is a solution of (\ref{E:CKD}) there is a representative $f$ of $\, \mathbf{f}$ that is a solution of (\ref{E:CKU}).
\end{enumerate}
\end{theorem}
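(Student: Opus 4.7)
The plan is to reduce both Cauchy problems to the classical Cauchy--Kovalevskaya theorem in $\C\{\mathbf{x},\mathbf{z}\}$ and then descend to the quotients $\widetilde{\mathcal{O}}$ and $\mathcal{O}$, with Lemma \ref{L:IDEAL}(c) serving as the bridge that keeps the descent compatible with the relevant ideals. For existence in (a), I would lift $a_i,b$ to $\tilde a_i,\tilde b\in\C\{\mathbf{x},\mathbf{z}\}$ and $g$ to a representative $\tilde g\in\C\{\mathbf{x},\mathbf{z}\}$ with $\partial_{x_n}\tilde g=0$, apply Cauchy--Kovalevskaya (equivalently, rectify the vector field $\partial_{x_n}+\sum\tilde a_i\partial_{x_i}$) to produce a convergent $\tilde f$, and take its image in $\widetilde{\mathcal{O}}$. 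For uniqueness, if $f_1,f_2$ are two solutions then $h:=f_1-f_2$ satisfies the homogeneous problem with zero initial datum; choosing a lift $\tilde h\in x_n\C\{\mathbf{x},\mathbf{z}\}$, one checks $\partial_{x_n}\tilde h+\sum\tilde a_i\partial_{x_i}\tilde h\in\widetilde J$, and Lemma \ref{L:IDEAL}(c), applied with its ideal taken to be $J$ and with $\beta_0=0$, yields $\tilde h\in\widetilde J$, i.e.\ $h=0$.

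Part (b) is immediate: the ring surjection $\widetilde{\mathcal{O}}\twoheadrightarrow\mathcal{O}$ induced by $\widetilde J\subset\widetilde I$ sends (\ref{E:CKU}) onto (\ref{E:CKD}), so the image $\mathbf f$ of any solution $f$ solves the quotient problem.

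Part (c) is the technical heart. Given $\mathbf f$ solving (\ref{E:CKD}), pick any lift $f_0\in\widetilde{\mathcal{O}}$. The defects
\[
c:=b-\partial_{x_n}f_0-\sum_{i=1}^{n-1} a_i\partial_{x_i}f_0\in\widetilde{\mathcal{O}},\qquad d:=g-(f_0+x_n\widetilde{\mathcal{O}})\in\widetilde{\mathcal{O}}/x_n\widetilde{\mathcal{O}}
\]
project to zero in $\mathcal{O}$, so both lie in the kernel $K=\widetilde I/\widetilde J$ (respectively in its image in $\widetilde{\mathcal{O}}/x_n\widetilde{\mathcal{O}}$). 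Part (a) produces a unique $k\in\widetilde{\mathcal{O}}$ with $\partial_{x_n}k+\sum a_i\partial_{x_i}k=c$ and $k+x_n\widetilde{\mathcal{O}}=d$; then $f:=f_0+k$ solves (\ref{E:CKU}), and the remaining task is to show $k\in K$, so that $f$ still represents $\mathbf f$. To accomplish this, I would lift $c$ to $\tilde c\in\widetilde I$ and, using Lemma \ref{L:IDEAL}(a), lift $d$ to some $\beta_0\in\widetilde I$ with $\partial_{x_n}\beta_0=0$; then Lemma \ref{L:IDEAL}(c), this time applied with its ideal equal to $I$, forces the lifted convergent solution into $\widetilde I$, so $k\in K$, as desired.

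The main obstacle is precisely this bookkeeping in (c): the argument must carry the two ideals $\widetilde J\subset\widetilde I$ at the same time and invoke Lemma \ref{L:IDEAL}(c) in two distinct guises—once with ideal $J$ (to secure uniqueness inside $\widetilde{\mathcal{O}}$ in part (a)) and once with ideal $I$ (to force the correction $k$ into $K$ in part (c))—while carefully choosing lifts of the right-hand side and of the initial data inside $\widetilde I$ so that the hypothesis $\partial_{x_n}\beta_0=0$ of the lemma is preserved.
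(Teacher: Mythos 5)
Your proposal is correct, and for parts (a) and (b) it follows the paper's proof exactly: existence in (a) comes from solving the lifted problem in $\C\{\mathbf{x},\mathbf{z}\}$ by the classical Cauchy--Kowalevski theorem and projecting, uniqueness comes from Lemma \ref{L:IDEAL}(c), and (b) comes from the fact that the operators descend, i.e.\ $\partial_{x_i}\widetilde{I}\subset\widetilde{I}$.

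Where you genuinely diverge is part (c). The paper's route is purely formal: the problem (\ref{E:CKD}) over $\mathcal{O}=\C\{\mathbf{x},\mathbf{z}\}/\widetilde{I}$ is itself a problem of type (\ref{E:CKU}), with $I$ playing the role of $J$, so the existence-and-uniqueness statement of (a) applies verbatim to it; consequently the unique solution $f$ of (\ref{E:CKU}) projects by (b) to a solution of (\ref{E:CKD}), uniqueness downstairs forces that projection to equal $\mathbf{f}$, and $f$ is the desired representative --- no construction needed. Your defect-correction argument --- lift $\mathbf{f}$ arbitrarily to $f_0$, solve an auxiliary Cauchy problem for the correction $k$, and force $k$ into $K=\widetilde{I}/\widetilde{J}$ by applying Lemma \ref{L:IDEAL}(a) and (c) with ideal $I$ --- is valid: truncating the $x_n$-dependence of a lift in $\widetilde{I}$ coefficientwise via Lemma \ref{L:IDEAL}(a) does produce the required $\beta_0\in\widetilde{I}$ with $\partial_{x_n}\beta_0=0$, and the uniqueness from (a) identifies $k$ with the projection of the lifted solution, so $f_0+k$ represents $\mathbf{f}$. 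But this re-proves by hand what the symmetry between $I$ and $J$ gives for free. Both arguments use Lemma \ref{L:IDEAL}(c) in the same two guises (once for $J$, once for $I$); the paper simply packages the second application as ``uniqueness of solutions of (\ref{E:CKD})'' rather than as a statement about a correction term, which shortens (c) to one line.
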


\begin{proof}
By Lemma \ref{L:IDEAL}, $\partial_{x_i} \widetilde{I}=\widetilde{I}$. Hence $(b)$ holds.

Assume $J=(0)$. The existence and uniqueness of the solution of (\ref{E:CKU}) is a special case of the classical Cauchy-Kowalevski Theorem. There is one and only one formal solution of (\ref{E:CKU}). Its convergence follows from the majorant method. 

The existence of a solution of (\ref{E:CKD}) follows from $(b)$.

Let $\mathbf{f_1},\mathbf{f_2}$ be two solutions of (\ref{E:CKD}). Let $f_j$ be a representative of $\mathbf{f_j}$ for $j=1,2$. Then 
$
\partial_{x_n} (f_2 - f_1) + \sum_{i=1}^{n-1} a_i \partial _{x_i} (f_2 - f_1) \in \widetilde{I}
$
and $f_2-f_1 + x_n\widetilde{\mathcal{O}} \in \widetilde{I} + x_n\widetilde{\mathcal{O}}$. By Lemma \ref{L:IDEAL}, $f_2 - f_1 \in \widetilde{I}$. Therefore $\mathbf{f_1}=\mathbf{f_2}$. This ends the proof of statement $(a)$. Statement $(c)$ follows from statements $(a)$ and $(b)$.

\end{proof}

Set $\Omega_{X|S}^1=\bigoplus_{i=1}^n \mathcal{O}dx_i$. We call the elements of $\Omega_{X|S}^1$ \emph{germs of relative differential forms} on $X\times S$. The map $d: \mathcal{O} \to \Omega_{X|S}^1$ given by $df=\sum_{i=1}^n \partial x_i f dx_i$ is called the \emph{relative differential} of $f$.

Assume that $dim\, X=3$ and let $\mathcal{L}$ be a contact structure on $X$. Let $\rho:X \times S\to X$ be the first projection. Let $\omega$ be a generator of $\mathcal{L}$. We will denote by $\mathcal{L}_S$ the sub $\mathcal{O}$-module of $\Omega_{X|S}^1$ generated by $\rho^\ast \omega$. We call $\mathcal{L}_S$ a \emph{relative contact structure} of $X\times S$. We call $(X\times S, \mathcal{L}_S)$ a relative contact manifold. We say that an isomorphism of analytic spaces 
\begin{equation}\label{E:CHIII1}
\chi : X\times S \to X \times S
\end{equation}
is a \emph{relative contact transformation} if $\chi({\bf 0},s)= ({\bf 0},s)$, $ \chi^\ast \omega \in \mathcal{L}_S$ for each $\omega \in \mathcal{L}_S$ and the diagram
\begin{equation}\label{E:DIAG1}
\xymatrix{
X   \ar@{_{(}->}[d] \ar[r]^{id_X} &X  \ar@{_{(}->}[d]\\
X\times S  \ar[d] \ar[r]^{ \chi} &X\times S \ar[d] \\
S   \ar[r]^{id_S} &S 
}
\end{equation}
commutes.

The demand of the commutativeness of diagram (\ref{E:DIAG1}) is a very restrictive condition but these are the only relative contact transformations we will need. We can and will assume that the local ring of $X$ equals $\C\{x,y,p\}$ and that $\mathcal{L}$ is generated by $dy-pdx$.

Set $\mathcal{O}=\C\{x,y,p,\mathbf{z}\}/\widetilde{I}$ and $\widetilde{\mathcal{O}}=\C\{x,y,p,\mathbf{z}\}/\widetilde{J}$. Let $\mathfrak{m}_X$ be the maximal ideal of $\C\{x,y,p\}$. Let $\mathfrak{m}\,[\widetilde{\mathfrak{m}}]$ be the maximal ideal of $\C\{\mathbf{z}\}/I \, [\C\{\mathbf{z}\}/J]$. Let $\mathfrak{n}\,[\widetilde{\mathfrak{n}}]$ be the ideal of $\mathcal{O}\,[\widetilde{\mathcal{O}}]$ generated by $\mathfrak{m}_X\mathfrak{m}\,[\mathfrak{m}_X\widetilde{\mathfrak{m}}]$.
\begin{remark}\label{R:CHIII2}
If (\ref{E:CHIII1}) is a relative contact transformation, there are $\alpha, \beta,\gamma \in \mathfrak{n}$ such that $\partial_x\beta \in \frak n$ and
\begin{equation}\label{E:CHIII3}
\chi(x,y,p,\mathbf{z})=(x+\alpha,y+\beta,p+\gamma,\mathbf{z}).
\end{equation}
\end{remark}

\begin{theorem}\label{T:RCKT}
\begin{enumerate}[(a)]
\item Let $\chi : X\times S \to X \times S$ be a relative contact transformation. There is $\beta_0 \in \mathfrak{n}$ such that $\partial_p \beta_0=0$, $\partial_x \beta_0 \in \frak n$, $\beta$ is the solution of the Cauchy problem
\begin{equation}\label{E:CAUCHY}
\left( 1+\frac{\partial \alpha}{\partial x} + p\frac{\partial \alpha}{\partial y}\right)\frac{\partial \beta}{\partial p} - p\frac{\partial \alpha}{\partial p}\frac{\partial \beta}{\partial y} - \frac{\partial \alpha}{\partial p}\frac{\partial \beta}{\partial x}=p\frac{\partial \alpha}{\partial p}, \qquad \beta -\beta_0 \in p\mathcal{O}
\end{equation}
and
\begin{equation}\label{E:GAMMA}
\gamma=\left( 1+\frac{\partial \alpha}{\partial x} + p\frac{\partial \alpha}{\partial y}\right)^{-1}\left(\frac{\partial \beta}{\partial x} + p\left(\frac{\partial \beta}{\partial y} - \frac{\partial \alpha}{\partial x} -p\frac{\partial \alpha}{\partial y}\right) \right).
\end{equation}
\item Given $\alpha, \beta_0 \in \mathfrak{n}$ such that $\partial_p \beta_0=0$ and $\partial_x \beta_0 \in \frak n$, there is a unique contact transformation $ \chi$ verifying the conditions of statement $(a)$. We will denote $\chi$ by $\chi_{\alpha,\beta_0}$.
\item Given a relative contact transformation $\widetilde{\chi} : X\times T \to X \times T$ there is one and only one contact transformation $\chi : X\times S \to X \times S$ such that the diagram
\begin{equation}\label{E:DIAG2}
\xymatrix{
X\times S   \ar@{_{(}->}[d]  \ar[r]^{\chi} &X \times S  \ar@{_{(}->}[d] \\
X\times T  \ar[r]^{\widetilde{\chi}} &X\times T 
}
\end{equation}
commutes.
\item Given $\alpha, \beta_0 \in \mathfrak{n}$ and $\widetilde{\alpha}, \widetilde{\beta}_0 \in \widetilde{\mathfrak{n}}$ such that $\partial_p\beta_0=0,\, \partial_p \widetilde{\beta}_0=0$, $\partial_x \beta_0 \in \frak n$, $\partial_x \widetilde{\beta}_0 \in \widetilde{\frak n}$ and $\widetilde{\alpha}, \widetilde{\beta}_0$ are representatives of $\alpha,\beta_0$, set $\chi=\chi_{\alpha,\beta_0}$, $\widetilde{\chi}=\chi_{\widetilde{\alpha}, \widetilde{\beta}_0}$. Then diagram (\ref{E:DIAG2}) commutes.
\end{enumerate}
\end{theorem}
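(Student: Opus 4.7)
The plan is to reduce each part to a coordinate computation in the form (\ref{E:CHIII3}) from Remark \ref{R:CHIII2} and then to invoke Theorem \ref{T:CKT}.

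\textbf{Part (a).} Writing $\chi(x,y,p,\mathbf{z}) = (x+\alpha,y+\beta,p+\gamma,\mathbf{z})$, I expand the contact identity $\chi^{\ast}(dy-pdx) = u(dy-pdx)$ and collect coefficients of $dx$, $dy$, $dp$ to obtain
\begin{align*}
&\beta_x - (p+\gamma)(1+\alpha_x) = -pu,\\
&1+\beta_y - (p+\gamma)\alpha_y = u,\\
&\beta_p - (p+\gamma)\alpha_p = 0.
\end{align*}
Eliminating $u$ from the first two yields $(p+\gamma)(1+\alpha_x+p\alpha_y) = \beta_x + p + p\beta_y$. Since $\alpha\in\mathfrak{n}$ the factor $1+\alpha_x+p\alpha_y$ is a unit, and solving for $\gamma$ gives exactly (\ref{E:GAMMA}). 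Substituting this expression for $p+\gamma$ into the $dp$ equation produces (\ref{E:CAUCHY}). Taking $\beta_0$ to be the unique representative of $\beta+p\mathcal{O}$ in $\mathcal{O}$ with $\partial_p\beta_0=0$, the $dx$ equation restricted to $p=0$ gives $\partial_x\beta_0 = \gamma|_{p=0}(1+\alpha_x|_{p=0})$, which lies in $\mathfrak{n}$ since $\gamma\in\mathfrak{n}$ and $\mathfrak{n}$ is an ideal.

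\textbf{Part (b).} Given $\alpha,\beta_0$ satisfying the hypotheses, I divide (\ref{E:CAUCHY}) by the unit $1+\alpha_x+p\alpha_y$ to bring it into the form treated by Theorem \ref{T:CKT} (with $x_n=p$ and remaining space variables $x,y$), which produces a unique $\beta\in\mathcal{O}$ with $\beta-\beta_0\in p\mathcal{O}$. Define $\gamma$ via (\ref{E:GAMMA}) and $\chi$ via (\ref{E:CHIII3}). Reversing the derivation of part (a) shows $\chi^{\ast}(dy-pdx)=u(dy-pdx)$ with $u=1+\beta_y-(p+\gamma)\alpha_y$, and $u$ is a unit because $\alpha,\beta,\gamma\in\mathfrak{n}$ force $u(0)=1$. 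Finally $\chi-\mathrm{id}\in\mathfrak{n}$, so the linearization of $\chi$ is the identity at the origin of every fiber over $S$, and the relative inverse function theorem shows $\chi$ is an isomorphism.

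\textbf{Parts (c), (d).} For (c), write $\widetilde{\chi}$ in the form (\ref{E:CHIII3}) with data $\widetilde{\alpha},\widetilde{\beta},\widetilde{\gamma}\in\widetilde{\mathfrak{n}}$ and reduce modulo $\widetilde{I}$; since $\widetilde{J}\subset\widetilde{I}$ this yields well-defined $\alpha,\beta,\gamma\in\mathcal{O}$ and a relative contact transformation $\chi$ on $X\times S$ that makes (\ref{E:DIAG2}) commute by construction. Uniqueness is forced: any such $\chi$ must coincide with $\widetilde{\chi}$ component-by-component modulo $\widetilde{I}$. For (d), note that $\widetilde{\beta}_0$ reduces to $\beta_0$ and that the Cauchy problem (\ref{E:CAUCHY}) for $\widetilde{\beta}$ reduces modulo $\widetilde{I}$ to the one for $\beta$; Theorem \ref{T:CKT}(b) gives $\widetilde{\beta}+\widetilde{I}=\beta$, and then (\ref{E:GAMMA}) forces $\widetilde{\gamma}+\widetilde{I}=\gamma$. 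Hence $\chi_{\alpha,\beta_0}$ is the descent of $\chi_{\widetilde{\alpha},\widetilde{\beta}_0}$.

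The main obstacle is the algebraic manipulation of part (a): extracting the clean linear Cauchy problem (\ref{E:CAUCHY}) and the formula (\ref{E:GAMMA}) from the contact identity, and then verifying that the regularity $\partial_x\beta_0\in\mathfrak{n}$ is automatic. Once that linear PDE has been isolated, Theorem \ref{T:CKT} supplies existence, uniqueness, and the descent properties almost for free.
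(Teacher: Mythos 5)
Your proposal is correct and takes essentially the same route as the paper: the paper proves (a) and (b) by citing Theorem 3.2 of \cite{AO} (whose content is exactly your coefficient computation extracting (\ref{E:CAUCHY}) and (\ref{E:GAMMA}) from the contact identity) combined with the singular Cauchy--Kowalevski result of Theorem~\ref{T:CKT}, and it obtains (c) and (d) from statements (b) and (c) of Theorem~\ref{T:CKT} via the same reduction-mod-$\widetilde{I}$ and descent arguments you give. The only detail you elide --- verifying that the solution $\beta$ and the resulting $\gamma$ in part (b) actually lie in $\mathfrak{n}$, which is where the hypothesis $\partial_x\beta_0\in\mathfrak{n}$ and a Lemma~\ref{L:IDEAL}-type argument are needed --- is likewise left implicit in the paper's own proof.
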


\begin{proof}
Statements $(a)$ and $(b)$ are a relative version of Theorem $3.2$ of \cite{AO}. In \cite{AO} we assume $S=\{0\}$. The proof works as long $S$ is smooth. The proof in the singular case depends on the singular variant of the Cauchy-Kowalevski Theorem introduced in \ref{T:CKT}. Statement $(c)$ follows from statement $(b)$ of Theorem \ref{T:CKT}. Statement $(d)$ follows from statement $(c)$ of Theorem \ref{T:CKT}.
\end{proof}

\begin{remark}
$(1)$ The inclusion $S \hookrightarrow T$ is said to be a \emph{small extension} if the surjective map $\mathcal{O}_T \twoheadrightarrow \mathcal{O}_S$ has one dimensional kernel. If the kernel is generated by $\e$, we have that, as complex vector spaces, $\mathcal{O}_T=\mathcal{O}_S \oplus \e\C$. Every extension of Artinian local rings factors through small extensions.
\end{remark}

\begin{theorem}\label{T:TE}
Let $S \hookrightarrow T$ be a small extension such that
\begin{align*}
\mathcal{O}_S &\cong \C\{\mathbf{z}\},\\
\mathcal{O}_T &\cong \C\{\mathbf{z},\e\}/(\e^2,\e z_1,\ldots \e z_m)=\C\{\mathbf{z}\} \oplus  \C \e.
\end{align*}
Assume $\chi : X\times S \to X \times S$ is a relative contact transformation given at the ring level by
\[
(x,y,p) \mapsto (H_1,H_2,H_3),
\]
$\alpha,\beta_0 \in  \mathfrak{m}_X$, such that  $\partial_p \beta_0=0$ and $\beta_0 \in (x^2,y)$. Then, there are uniquely determined  $\beta, \gamma \in \mathfrak{m}_X$ such that $\beta-\beta_0 \in p \mathcal{O}_X$ and $\widetilde{\chi} :~X\times T \to X \times T$, given by
\[
\widetilde{\chi}(x,y,p,\mathbf{z},\e)=(H_1+\e\alpha,H_2+\e\beta,H_3+\e\gamma,\mathbf{z},\e),
\]
 is a relative contact transformation extending $\chi$ (diagram (\ref{E:DIAG2}) commutes). Moreover, the Cauchy problem (\ref{E:CAUCHY}) for $\widetilde{\chi}$ takes the simplified form
\begin{equation}\label{E:CAUCHYTEBETA}
\frac{\partial \beta}{\partial p}=p\frac{\partial \alpha}{\partial p}, \qquad \beta - \beta_0 \in \C\{x,y,p\}p
\end{equation}
and
\begin{equation}\label{E:CAUCHYTEGAMMA}
\gamma= \frac{\partial \beta}{\partial x} + p(\frac{\partial \beta}{\partial y} - \frac{\partial \alpha}{\partial x}) - p^2\frac{\partial \alpha}{\partial y}.
\end{equation}
\end{theorem}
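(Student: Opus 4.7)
The strategy is to realize $\widetilde\chi$ as the unique relative contact transformation produced by Theorem~\ref{T:RCKT}(b) over $T$, applied to the data $\widetilde\alpha:=\alpha_\chi+\varepsilon\alpha$ and $\widetilde\beta_0:=\beta_{\chi,0}+\varepsilon\beta_0$, where $(\alpha_\chi,\beta_{\chi,0})\in\mathfrak n\times\mathfrak n$ is the pair encoding $\chi$ via Theorem~\ref{T:RCKT}(b). The first task is to verify that $\widetilde\alpha,\widetilde\beta_0\in\widetilde{\mathfrak n}$, $\partial_p\widetilde\beta_0=0$, and $\partial_x\widetilde\beta_0\in\widetilde{\mathfrak n}$. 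A lift of $\alpha_\chi,\beta_{\chi,0}$ sits in $\mathfrak m_X\cdot(\mathbf z)\subset\widetilde{\mathfrak n}$; the inclusion $\varepsilon\alpha\in\mathfrak m_X\cdot\mathfrak m_T\subset\widetilde{\mathfrak n}$ is automatic from $\alpha\in\mathfrak m_X$; and the assumption $\beta_0\in(x^2,y)$ is precisely what forces $\varepsilon\partial_x\beta_0\in\varepsilon\cdot(x)\subset\widetilde{\mathfrak n}$, a condition that would fail if $\beta_0$ contained a term linear in $x$. Theorem~\ref{T:RCKT}(b) then yields a unique $\widetilde\chi=\chi_{\widetilde\alpha,\widetilde\beta_0}$, and since $\widetilde\alpha,\widetilde\beta_0$ reduce modulo $\varepsilon$ to $\alpha_\chi,\beta_{\chi,0}$, Theorem~\ref{T:RCKT}(d) guarantees that diagram~(\ref{E:DIAG2}) commutes, i.e.\ $\widetilde\chi$ extends $\chi$.

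To read off the explicit form and the simplified equations, write $\widetilde\beta=\beta_\chi+\varepsilon\beta$ and $\widetilde\gamma=\gamma_\chi+\varepsilon\gamma$; this splitting is unique because the relations $\varepsilon\mathbf z=\varepsilon^2=0$ force every element of $\widetilde{\mathcal O}$ to decompose as an element of $\mathcal O$ plus $\varepsilon$ times an element of $\C\{x,y,p\}$. In particular $\beta,\gamma$ are automatically in $\C\{x,y,p\}$, and they lie in $\mathfrak m_X$ because $\widetilde\beta,\widetilde\gamma\in\widetilde{\mathfrak n}$. Substituting these expansions into the Cauchy problem (\ref{E:CAUCHY}) and the formula (\ref{E:GAMMA}) for $\widetilde\chi$, the decisive observation is that $\alpha_\chi,\beta_\chi$ and all of their partial derivatives lie in $(\mathbf z)$; hence they are annihilated by $\varepsilon$. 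Consequently every mixed monomial of the form $\varepsilon\cdot(\text{partial of }\alpha_\chi\text{ or }\beta_\chi)\cdot(\text{anything})$ vanishes in $\widetilde{\mathcal O}$. The non-$\varepsilon$ part of (\ref{E:CAUCHY}) then reproduces the Cauchy problem already solved by $\chi$, while the $\varepsilon$-part collapses to $\partial_p\beta=p\,\partial_p\alpha$, which is (\ref{E:CAUCHYTEBETA}); the boundary condition $\widetilde\beta-\widetilde\beta_0\in p\widetilde{\mathcal O}$ translates on the $\varepsilon$-part to $\beta-\beta_0\in p\,\C\{x,y,p\}$. The analogous expansion of (\ref{E:GAMMA}) yields (\ref{E:CAUCHYTEGAMMA}). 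Uniqueness of $\beta$ follows from Theorem~\ref{T:CKT}(a) applied to this reduced Cauchy problem, and $\gamma$ is then pinned down by the closed formula.

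The only delicate point is the verification that $\partial_x\widetilde\beta_0\in\widetilde{\mathfrak n}$ under the weakened hypothesis $\beta_0\in\mathfrak m_X$ (rather than $\beta_0\in\mathfrak n$), which is exactly what the extra condition $\beta_0\in(x^2,y)$ is tailored to ensure. Beyond that, the argument is careful bookkeeping of which terms survive the relations $\varepsilon\mathbf z=\varepsilon^2=0$ in the expansion of (\ref{E:CAUCHY}) and (\ref{E:GAMMA}).
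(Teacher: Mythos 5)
Your proof is correct, but it takes a genuinely different route from the paper's. The paper argues directly from the definition of a relative contact transformation: it writes $d(H_2+\e\beta)-(H_3+\e\gamma)\,d(H_1+\e\alpha)=f(dy-pdx)$ with $f=f'+\e f''$, subtracts the identity $dH_2-H_3\,dH_1=f'(dy-pdx)$ satisfied by $\chi$, and uses $\e\,\mathfrak{m}_{\mathcal{O}_T}=0$ to collapse the $\e$-part into three scalar equations, namely (\ref{E:CAUCHYTEBETA}), (\ref{E:CAUCHYTEGAMMA}) and $f''=\partial_y\beta-p\,\partial_y\alpha$; determinacy of $\beta$, hence of $\gamma$, then follows by integration in $p$. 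You instead derive the theorem as a specialization of Theorem \ref{T:RCKT}: you encode $\chi$ as $\chi_{\alpha_\chi,\beta_{\chi,0}}$, feed the data $\widetilde\alpha=\alpha_\chi+\e\alpha$, $\widetilde\beta_0=\beta_{\chi,0}+\e\beta_0$ into \ref{T:RCKT}(b), obtain the extension property from \ref{T:RCKT}(d), and then extract the simplified equations by expanding (\ref{E:CAUCHY}) and (\ref{E:GAMMA}) in $\e$. Both proofs run on the same mechanism (the relations $\e^2=\e z_i=0$ annihilate every mixed term involving $\mathfrak{m}_{\mathcal{O}_S}$ or its derivatives), but your version has the merit of making transparent exactly why $\beta_0\in(x^2,y)$ is assumed: it is precisely the condition $\partial_x\widetilde\beta_0\in\widetilde{\mathfrak{n}}$, i.e.\ admissibility of the data for \ref{T:RCKT}(b) over $T$, a point the paper's computation leaves implicit. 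The price is reliance on the full strength of Theorem \ref{T:RCKT} over the non-reduced base $T$ (hence on Theorem \ref{T:CKT}), whereas the paper's argument is self-contained and also exhibits the unit correction $f''$ explicitly. One step you should spell out in the uniqueness part: for an arbitrary candidate extension whose $\e$-part $\beta'$ satisfies $\beta'-\beta_0\in p\,\mathcal{O}_X$, the initial datum furnished by \ref{T:RCKT}(a) is forced to be the $p$-free part of $\beta_\chi+\e\beta'$, which equals $\beta_{\chi,0}+\e\beta_0$; only after this identification do all candidates satisfy the same reduced Cauchy problem, so that your appeal to its unique solvability indeed yields uniqueness of $\beta$ and $\gamma$.
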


\begin{proof}
We have that $\widetilde{\chi}$ is a relative contact transformation if and only if there is $f:=f' + \e f'' \in \mathcal{O}_T\{x,y,p\}$ with $f \notin (x,y,p)\mathcal{O}_T\{x,y,p\},\, f' \in \mathcal{O}_S\{x,y,p\}, \, f'' \in \C\{x,y,p\}=\mathcal{O}_X$ such that
\begin{equation}\label{E2}
d(H_2 + \e\beta)-(H_3 + \e\gamma)d(H_1 + \e\alpha)=f(dy-pdx).
\end{equation}
Since $\chi$ is a relative contact transformation we can suppose that
\[
dH_2-H_3dH_1=f'(dy-pdx).
\]
Using the fact that $\e \mathfrak{m}_{\mathcal{O}_T}$ we see that (\ref{E2}) is equivalent to
\[
\begin{cases}
\frac{\partial \beta}{\partial p}=p\frac{\partial \alpha}{\partial p},\\
\gamma= \frac{\partial \beta}{\partial x} + p(\frac{\partial \beta}{\partial y} - \frac{\partial \alpha}{\partial x}) - p^2\frac{\partial \alpha}{\partial y},\\
f''= \frac{\partial \beta}{\partial y} - p\frac{\partial \alpha}{\partial y}.
\end{cases}
\]
As $\beta - \beta_0 \in (p)\C\{x,y,p\}$ we have that $\beta$, and consequently $\gamma$, are completely determined by $\alpha$ and $\beta_0$.
\end{proof}

\begin{remark}\label{R:BETAZERO}
Set $\alpha=\sum_k \alpha_k p^k$, $\beta=\sum_k \beta_kp^k$, $\gamma=\sum_k \gamma_k p^k$, where $\alpha_k, \beta_k, \gamma_k \in \C\{x,y\}$ for each $k \geq 0$ and $ \beta_0 \in (x^2,y)$. Under the assumptions of Theorem~\ref{T:TE},
\begin{enumerate}[\upshape (i)]
\item $\beta_k= \frac{k-1}{k}\alpha_{k-1}, \qquad k \geq1$ .
\item Moreover,
 \[
 \gamma_0= \frac{\partial \beta_0}{\partial x}, \; \gamma_1=\frac{\partial \beta_0}{\partial y} - \frac{\partial \alpha_0}{\partial x}, \; \gamma_k=-\frac{1}{k}\frac{\partial \alpha_{k-1}}{\partial x} - \frac{1}{k-1}\frac{\partial \alpha_{k-2}}{\partial y}, \qquad k\geq 2.
 \]
Since,
\[
\frac{\partial}{\partial y} \gamma_0=\frac{\partial}{\partial x}(\frac{\partial \alpha_0}{\partial x} + \gamma_1),
\]
$\beta_0$ is the solution of the Cauchy problem
\[
\frac{\partial \beta_0}{\partial x}=\gamma_0, \quad \frac{\partial \beta_0}{\partial y} = \frac{\partial \alpha_0}{\partial x} + \gamma_1, \qquad \beta_0 \in (x^2,y).
\]
\end{enumerate}

\end{remark}

\section{Categories of Deformations}\label{S2.2}

A category $\mathfrak{C}$ is called a \emph{groupoid} if all morphisms of $\mathfrak{C}$ are isomorphisms.

Let $p: \mathfrak{F} \to \mathfrak{C}$ be a functor.

Let $S$ be an object of $\mathfrak{C}$. We will denote by $\mathfrak{F}(S)$ the subcategory of $\mathfrak{F}$ given by the following conditions:
\begin{itemize}
\item $\Psi$ is an object of $\mathfrak{F}(S)$ if $p(\Psi)=S$.
\item $\chi$ is a morphism of $\mathfrak{F}(S)$ if $p(\chi)=id_S$.
\end{itemize}

Let $\chi \,[\Psi]$ be a morphism [an object] of $\mathfrak{F}$. Let $f \, [S]$ be a morphism [an object] of $\mathfrak{C}$. We say that $\chi \,[\Psi]$ is a morphism [an object] of $\mathfrak{F}$ over $f \, [S]$ if $p(\chi)=f \, [p(\Psi)=S]$.

A morphism $\chi' : \Psi' \to \Psi$ of $\mathfrak{F}$ over $f: S' \to S$ is called \emph{cartesian} if for each morphism $\chi'' : \Psi'' \to \Psi$ of $\mathfrak{F}$ over $f$ there is exactly one morphism $\chi : \Psi'' \to \Psi'$ over $id_{S'}$ such that $\chi' \circ \chi = \chi''$.

If the morphism $\chi' : \Psi' \to \Psi$ over $f$ is cartesian, $\Psi'$ is well defined up to a unique isomorphism. We will denote $\Psi'$ by $f^\ast \Psi$ or $\Psi \times_S S'$.

We say that $\mathfrak{F}$ is a \emph{fibered category} over $\mathfrak{C}$ if
\begin{enumerate}
\item For each morphism $f:S' \to S$ in $\mathfrak{C}$ and each object $\Psi$ of $\mathfrak{F}$ over $S$ there is a morphism  $\chi': \Psi' \to \Psi$ over $f$ that is cartesian.
\item The composition of cartesian morphisms is cartesian.
\end{enumerate}

A fibered groupoid is a fibered category such that  $\mathfrak{F}(S)$ is a groupoid for each $S \in  \mathfrak{C}$.

\begin{lemma}\label{L:DESCARTES}
If $p: \mathfrak{F} \to \mathfrak{C}$ satisfies $(1)$ and $\mathfrak{F}(S)$ is a groupoid for each object $S$ of $\mathfrak{C}$, then $\mathfrak{F}$ is a fibered groupoid over $\mathfrak{C}$.
\end{lemma}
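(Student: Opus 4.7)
The plan is to verify the single axiom of a fibered category that is not part of the hypothesis, namely condition $(2)$: that the composition of cartesian morphisms is again cartesian. Axiom $(1)$ is assumed, and once $(2)$ is established, $\mathfrak{F}$ is a fibered category, which together with the assumed groupoid property of each $\mathfrak{F}(S)$ gives that $\mathfrak{F}$ is a fibered groupoid over $\mathfrak{C}$.

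So let $\chi_1 \colon \Psi_1 \to \Psi_2$ be cartesian over $f_1 \colon S_1 \to S_2$ and $\chi_2 \colon \Psi_2 \to \Psi_3$ cartesian over $f_2 \colon S_2 \to S_3$; the goal is to show that $\chi_2 \circ \chi_1$ is cartesian over $f_2 \circ f_1$. The first step is to apply $(1)$ to the composite $f_2\circ f_1$ and the object $\Psi_3$, producing a cartesian morphism $\zeta \colon \widetilde\Psi \to \Psi_3$ over $f_2\circ f_1$. Since $\chi_2\circ\chi_1$ is also a morphism over $f_2\circ f_1$, the cartesian property of $\zeta$ yields a unique $u \colon \Psi_1 \to \widetilde\Psi$ over $\mathrm{id}_{S_1}$ with $\zeta \circ u = \chi_2 \circ \chi_1$. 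Because $\mathfrak{F}(S_1)$ is a groupoid, $u$ is an isomorphism.

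The second step is to transport the cartesian property of $\zeta$ to $\chi_2\circ\chi_1$ along the isomorphism $u$. Given any test morphism $\chi'' \colon \Psi'' \to \Psi_3$ over $f_2\circ f_1$, cartesianness of $\zeta$ provides a unique $v \colon \Psi'' \to \widetilde\Psi$ over $\mathrm{id}_{S_1}$ with $\zeta \circ v = \chi''$. Setting $\chi := u^{-1} \circ v$, one computes $(\chi_2\circ\chi_1) \circ \chi = \zeta \circ u \circ u^{-1} \circ v = \chi''$, and if $\chi' \colon \Psi'' \to \Psi_1$ over $\mathrm{id}_{S_1}$ also satisfies $(\chi_2\circ\chi_1)\circ\chi' = \chi''$, then $\zeta\circ(u\circ\chi') = \chi''$ forces $u\circ\chi' = v$ by uniqueness for $\zeta$, hence $\chi' = \chi$. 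Thus $\chi_2\circ\chi_1$ satisfies the cartesian property.

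I do not expect a real obstacle here; the argument is a formal unfolding of the definitions. The one delicate point to flag is that the cartesian property used in the paper is the \emph{weak} form, in which $\chi''$ is required to lie over the same base map as the cartesian morphism. For this reason one cannot directly plug $\chi''$, which lives over $f_2\circ f_1$, into the cartesian property of $\chi_2$, which is defined only over $f_2$. Axiom $(1)$ is invoked precisely to manufacture a reference cartesian morphism $\zeta$ over $f_2\circ f_1$, and the groupoid assumption on $\mathfrak{F}(S_1)$ is exactly what allows the comparison map $u$ to be inverted so that cartesianness passes from $\zeta$ to $\chi_2\circ\chi_1$.
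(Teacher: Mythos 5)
Your proof is correct and is essentially the paper's own argument: both manufacture a reference cartesian morphism over the composite base map via axiom $(1)$, factor the given morphism through it, and invert the comparison morphism using the groupoid hypothesis on the fiber. The only cosmetic difference is that the paper states the result as ``every morphism of $\mathfrak{F}$ is cartesian'' (which makes axiom $(2)$ immediate) --- and indeed your argument never uses the cartesianness of $\chi_1$ or $\chi_2$, so it proves exactly that same stronger statement.
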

\begin{proof}
Let $\chi : \Phi \to \Psi$ be an arbitrary morphism of $\mathfrak{F}$. It is enough to show that $\chi$ is cartesian. Set $f=p(\chi)$. Let $\chi' : \Phi' \to \Psi$ be another morphism over $f$. Let $f^\ast \Psi \to \Psi$ be a cartesian morphism over $f$. There are morphisms $\alpha :\Phi' \to f^\ast \Psi$, $\beta : \Phi \to f^\ast \Psi$ such that the solid diagram
\begin{equation}\label{D:DIAGCART}
\xymatrix{
f^\ast \Psi \ar[rd] &\Phi \ar[l]^{\beta} \ar[d]^{\chi} &\Phi' \ar@{.>}[l] \ar@/_1pc/[ll]_{\alpha} \ar[ld]^{\chi'}\\
&\Psi
}
\end{equation}
commutes. Hence $\beta^{-1} \circ \alpha$ is the only morphism over $f$ such that diagram (\ref{D:DIAGCART}) commutes.

\end{proof}

Let $\mathfrak{A}n$ be the category of analytic complex space germs. Let $0$ denote the complex vector space of dimension $0$. Let $p: \mathfrak{F} \to \mathfrak{A}n$ be a fibered category.

\begin{definition}\label{D:VERSALDEF}
Let $T$ be an analytic complex space germ. Let $\psi \, [\Psi]$ be an object of $\mathfrak{F}(0) \, [\mathfrak{F}(T)]$. We say that $\Psi$ is a \emph{versal deformation} of $\psi$ if given
\begin{itemize}
\item a closed embedding $f: T'' \hookrightarrow T'$,
\item a morphism of complex analytic space germs $g:T'' \to T$,
\item an object $\Psi'$ of $\mathfrak{F}(T')$ such that $f^\ast \Psi' \cong g^\ast \Psi$,
\end{itemize}
there is a morphism of complex analytic space germs $h: T' \to T$ such that
\[
h \circ f=g \qquad \text{and} \qquad h^\ast \Psi \cong \Psi'.
\]
If $\Psi$ is versal and for each $\Psi'$ the tangent map $T(h):T_{T'} \to T_T$ is determined by $\Psi'$, $\Psi$ is called a \emph{semiuniversal deformation} of $\psi$.

\end{definition}

Let $T$ be a germ of a complex analytic space. Let $A$ be the local ring of $T$ and let $\mathfrak{m}$ be the maximal ideal of $A$. Let $T_n$ be the complex analytic space with local ring $A /\mathfrak{m}^n$ for each positive integer $n$. The canonical morphisms
\[
A \to A /\mathfrak{m}^n \qquad \text{and} \qquad A /\mathfrak{m}^n \to A /\mathfrak{m}^{n+1}
\]
induce morphisms $\alpha_n: T_n \to T$ and $\beta_n: T_{n+1} \to T_n$.

A morphism $f: T'' \to T'$ induces morphisms $f_n: T''_n \to T'_n$ such that the diagram

\[
\xymatrix{
T''  \ar[r]^{f} &T'\\
T''_n \ar@{^{(}->}[u]^{\alpha''_n}  \ar[r]^{f_n} &T'_n \ar@{^{(}->}[u]_{\alpha'_n}  \\
T''_{n+1} \ar@{^{(}->}[u]^{\beta''_n}   \ar[r]^{f_{n+1}} &T'_{n+1}  \ar@{^{(}->}[u]_{\beta'_n}
}
\]
commutes.

\begin{definition}\label{D:FORMALVERSALDEF}
We will follow the terminology of Definition \ref{D:VERSALDEF}. Let $g_n=g \circ \alpha''_n$. We say that $\Psi$ is a \emph{formally versal deformation} of $\psi$ if there are morphisms $h_n: T'_n \to T$ such that
\[
h_n \circ f_n =g_n, \qquad h_n\circ \beta'_n=h_{n+1} \qquad \text{and} \qquad h_n^\ast \Psi \cong {\alpha'_n}^\ast \Psi'.
\]
If $\Psi$ is formally versal and for each $\Psi'$ the tangent maps $T(h_n): T_{T'_n} \to T_T$ are determined by ${\alpha'_n}^\ast \Psi'$, $\Psi$ is called a \emph{formally semiuniversal deformation} of $\psi$.

\end{definition}

\begin{theorem}[\cite{Flenner}, Theorem 5.2]\label{T:FLENNER}
Let $\mathfrak{F} \to \mathfrak{C}$ be a fibered groupoid. Let $\psi \in \mathfrak{F}(0)$. If there is a versal deformation of $\psi$, every formally versal \em[\em semiuniversal \em]\em deformation of $\psi$ is versal \em[\em semiuniversal \em]\em .
\end{theorem}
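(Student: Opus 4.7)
The plan is to use the hypothesized analytic versal deformation $\Psi_0\in\mathfrak{F}(T_0)$ as an auxiliary pivot, reducing versality of $\Psi$ to an algebraization problem that formal versality of $\Psi$ solves at each finite order. Versality of $\Psi_0$ applied to $\Psi$ itself supplies a morphism $\phi\colon T\to T_0$ with $\phi^*\Psi_0\cong\Psi$. Given any deformation datum $(f\colon T''\hookrightarrow T',\, g\colon T''\to T,\, \Psi'\in\mathfrak{F}(T'))$ with $f^*\Psi'\cong g^*\Psi$, versality of $\Psi_0$ applied to the triple $(f,\phi\circ g,\Psi')$ yields $u\colon T'\to T_0$ with $u\circ f=\phi\circ g$ and $u^*\Psi_0\cong\Psi'$. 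Verifying versality of $\Psi$ is thus equivalent to producing an analytic morphism $h\colon T'\to T$ with $h\circ f=g$ and $\phi\circ h\cong u$, since then $h^*\Psi\cong h^*\phi^*\Psi_0\cong u^*\Psi_0\cong\Psi'$.

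Next I would construct such an $h$ formally. Formal versality of $\Psi$ produces a compatible tower $h_n\colon T'_n\to T$ with $h_n\circ f_n=g\circ\alpha''_n$, $h_n\circ\beta'_n=h_{n+1}$ and $h_n^*\Psi\cong{\alpha'_n}^*\Psi'$. Both $\phi\circ h_n$ and $u\circ\alpha'_n$ induce isomorphic deformations of $\psi$ over $T'_n$, and using the groupoid structure of $\mathfrak{F}(T'_n)$ one can inductively adjust the identifying isomorphisms so that the two morphisms of germs agree at every order; the $h_n$ then assemble into a formal morphism $\hat h\colon \widehat{T'}\to T$ with $\phi\circ\hat h=\hat u$.

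The main obstacle is the algebraization step: promoting $\hat h$ to a genuine analytic morphism of germs $h\colon T'\to T$. This is where the presence of an analytic versal $\Psi_0$ becomes essential. The identity $\phi\circ\hat h=\hat u$ exhibits the graph of $\hat h$ as a formal section of the analytic projection $T'\times_{T_0}T\to T'$ induced by $u$ and $\phi$; one then invokes Artin's analytic approximation theorem (or, equivalently, the coherence results for Artin functors used in Flenner's original argument) to replace this formal section by a convergent analytic one, yielding the required $h$. The semiuniversal assertion follows automatically, since the tangent map $T(h)$ arises as the limit of the maps $T(h_n)$, which by formal semiuniversality are determined by $\Psi'$.
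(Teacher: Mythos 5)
First, a remark on the comparison itself: the paper does not prove this statement — it is quoted from Flenner (Satz 5.2) with a citation — so your attempt has to be judged on its own merits rather than against an argument in the text. Your opening reduction is sound and is the natural way to exploit the hypothesis: versality of $\Psi_0$ applied to $\Psi$ gives $\phi\colon T\to T_0$ with $\phi^*\Psi_0\cong\Psi$; applied to $(f,\phi\circ g,\Psi')$ it gives $u\colon T'\to T_0$ with $u\circ f=\phi\circ g$ and $u^*\Psi_0\cong\Psi'$; and if one could produce an analytic $h\colon T'\to T$ with $h\circ f=g$ and $\phi\circ h=u$, then indeed $h^*\Psi\cong(\phi\circ h)^*\Psi_0=u^*\Psi_0\cong\Psi'$ and versality would follow.

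The genuine gap is the step that produces the identity $\phi\circ\hat h=\hat u$. Formal versality of $\Psi$ only yields a tower $h_n\colon T'_n\to T$ with $h_n\circ f_n=g_n$ and $h_n^*\Psi\cong{\alpha'_n}^*\Psi'$; consequently $\phi\circ h_n$ and $u\circ\alpha'_n$ are two morphisms $T'_n\to T_0$ whose pullbacks of $\Psi_0$ are isomorphic. That is strictly weaker than their being equal, and it cannot be repaired by ``adjusting the identifying isomorphisms'': isomorphisms in the fibre categories $\mathfrak{F}(T'_n)$ live over $id_{T'_n}$ and do not change the underlying morphisms of base germs at all. The obstruction is that $\Psi_0$ is merely versal, not universal: classifying maps to $T_0$ are highly non-unique (even for a semiuniversal deformation only the tangent map of a classifying map is determined, and here $\Psi_0$ is not assumed semiuniversal), so isomorphic pullbacks never force equality of the maps, at any order. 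Without $\phi\circ\hat h=\hat u$ the graph of $\hat h$ is not a formal section of $T'\times_{T_0}T\to T'$, and the Artin approximation step has no system of analytic equations to which it can be applied; if instead you try to approximate $\hat h$ subject to the condition that $h^*\Psi\cong\Psi'$, that condition is not expressible by analytic equations in an abstract fibered groupoid, and turning it into one is precisely the hard content of Flenner's theorem. (A smaller point in the same step: even granting $\phi\circ\hat h=\hat u$, the side condition $h\circ f=g$ must be incorporated as equations in the approximation problem, otherwise the approximating section matches $g$ only modulo a high power of the maximal ideal.) So the overall strategy — pivot through $\Psi_0$, solve formally, algebraize — is the right shape, but the bridge from isomorphism classes of pullbacks to equality of classifying morphisms is missing, and that bridge is the theorem.
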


Let $Z$ be a curve of $\C^n$ with irreducible components $Z_1,\ldots,Z_r$. Set $\bar{\C}= \bigsqcup_{i=1}^r \bar{C}_i$ where each $\bar{C}_i$ is a copy of $\C$. Let $\varphi_i$ be a parametrization of $Z_i$, $1 \leq i \leq r$. Let $\varphi : \bar{\C} \to \C^n$ be the map such that $\varphi |_{\bar{C}_i} = \varphi_i$, $1 \leq i \leq r$. We call $\varphi$ the \emph{parametrization} of $Z$. 

Let $T$ be an analytic space. A morphism of analytic spaces $\Phi : \bar{\C} \times T \to \C^n \times T$ is called a \emph{deformation of $\varphi$ over $T$} if the diagram
\[
\xymatrix{
\bar{\C}   \ar@{_{(}->}[d] \ar[r]^{\varphi} &\C^n  \ar@{_{(}->}[d]\\ 
\bar{\C}\times T  \ar[d] \ar[r]^{\Phi} &\C^n\times T \ar[d]  \\
T    \ar[r]^{id_T} &T  
}
\]
commutes. The analytic space $T$ is called de \emph{base space} of the deformation.

We will denote by $\Phi_i$ the composition
\[
\bar{C}_i \times T \hookrightarrow \bar{\C} \times T \xrightarrow{\Phi} \C^n \times T \to \C^n, \qquad 1 \leq i \leq r.
\]
The maps $\Phi_i$, $1 \leq i \leq r$, determine $\Phi$.

Let $\Phi$ be a deformation of $\varphi$ over $T$. Let $f: T' \to T$ be a morphism of analytic spaces. We will denote by $f^\ast \Phi$ the deformation of $\varphi$ over $T'$ given by
\[
(f^\ast \Phi)_i= \Phi_i \circ (id_{\bar{C}_i} \times f).
\]
We call $f^\ast \Phi$ the \emph{pullback} of $\Phi$ by $f$.

Let $\Phi':\bar{\C} \times T \to \C^n \times T$ be another deformation of $\varphi$ over $T$. A morphism from $\Phi'$ into $\Phi$ is a pair $(\chi,\xi)$ where $\chi :\C^n \times T \to \C^n \times T$ and $\xi : \bar{\C} \times T \to \bar{\C} \times T$ are isomorphisms of analytic spaces such that the diagram
\[
\xymatrix{
T  &\bar{\C}\times T \ar[l] \ar[r]^{\Phi} &\C^n\times T \ar[r] &T\\
&\bar{\C} \ar@{^{(}->}[u] \ar@{_{(}->}[d] \ar[r]^{\varphi} &\C^n\times\{0\} \ar@{^{(}->}[u] \ar@{_{(}->}[d] \\
T \ar[uu]^{id_T}  &\bar{\C}\times T  \ar[l] \ar@/^2pc/[uu]^{\xi}  \ar[r]^{\Phi'} &\C^n\times T  \ar@/_2pc/[uu]_{\chi} \ar[r] &T \ar[uu]_{id_T}
}
\]
commutes.

Let $\Phi'$ be a deformation of $\varphi$ over $S$ and $f:S \to T$ a morphism of analytic spaces. A \emph{morphism of $\Phi' $ into $ \Phi$ over $f$} is  a morphism from $\Phi'$ into $f^\ast \Phi$. There is a functor $p$ that associates $T$ to a deformation $\Psi$ over $T$ and $f$ to a morphism of deformations over $f$. 

Given $t \in T$ let $Z_t$ be the curve parametrized by the composition
\[
\bar{\C} \times \{t\} \hookrightarrow \bar{\C} \times T \xrightarrow{\Phi} \C^n \times T \to \C^n.
\]
We call $Z_t$ the \emph{fiber of the deformation $\Phi$ at the point $t$}.

Let $\varphi: \bar{\C} \to \C^2$ be the parametrization of a plane curve $Z$. We will denote by $\mathcal{D}ef_{\varphi} \, [\mathcal{D}ef^{\, em}_{\varphi}]$ the \emph{category of deformations [equimultiple deformations] $\Phi$ of (the parametrization $\varphi$ of) the plane curve $Z$}.

Consider in $\C^3$ the contact structure given by the differential form $dy-pdx$. Let $\psi: \bar{\C} \to \C^3$ be the parametrization  of a Legendrian curve $L$. We say that a deformation $\Psi$ of $\psi$ is a \emph{Legendrian deformation of $\psi$} if all of its fibers are Legendrian. We say that $(\chi,\xi)$ is an isomorphism of Legendrian deformations if $\chi : X \times T \to X \times T$ is a relative contact transformation. We will denote by $\widehat{\mathcal{D}ef}_{\psi} \, [\widehat{\mathcal{D}ef}^{\,em}_{\psi}]$ the category of Legendrian [equimultiple Legendrian] deformations of $\psi$. All deformations are assumed to have trivial sections (see \cite{Greuel-book}).

Assume that $\psi=\mathcal{C}on \, \f$ parametrizes a germ of a Legendrian curve $L$, in generic position, in $(\C^3_{(x,y,p)},\omega)$. If $\Phi \in \mathcal{D}ef_{\varphi}$ is given by 
\begin{equation}\label{PHITS}
\Phi_i(t_i,\mathbf{s})=\left(X_i(t_i,\mathbf{s}),Y_i(t_i,\mathbf{s})\right), \qquad 1\leq i \leq r,
\end{equation}
such that $P_i(t_i,\mathbf{s}) := \partial_t Y_i(t_i,\mathbf{s}) / \partial_t X_i(t_i,\mathbf{s}) \in \C\{t_i,\mathbf{s}\}$ for  $1\leq i \leq r$, then
\begin{equation}\label{PSITS}
\Psi_i(t_i,\mathbf{s})=\left(X_i(t_i,\mathbf{s}),Y_i(t_i,\mathbf{s}), P_i(t_i,\mathbf{s})\right).
\end{equation}
defines a deformation $\Psi$ of $\psi$ which we call \emph{conormal of $\Phi$}. Notice that in this case all fibers of $\Phi$ have the same tangent space $\{y=0\}$.
We will denote $\Psi$ by $\mathcal{C}on \, \Phi$. If $\Psi \in \widehat{\mathcal{D}ef}_{\psi}$ is given by (\ref{PSITS}), we call \emph{plane projection of $\Psi$} to the deformation $\Phi$ of $\f$ given by (\ref{PHITS}). We will denote $\Phi$ by $\Psi^\pi$. 

Let us consider the full subcategory $\overset{\to}{\mathcal{D}ef}_{\varphi}$  of the deformations $\Phi \in \mathcal{D}ef^{\, em}_{\varphi}$ such that all fibers of $\Phi$ have the same tangent space $\{y=0\}$. 

 \begin{remark}\label{R:admissible}
 We see immediately  that if $\Phi \in \overset{\to}{\mathcal{D}ef}_{\varphi}$ then $\mathcal{C}on \, \Phi$ exists. However, it should be noted that there are more deformations for which the conormal is defined:
 
 Let $\Phi$ be the deformation of $\f=(t^3,t^{10})$ given by
 \[
 X(t,s)=st+t^3; \quad Y(t,s)=\frac{5}{12}st^8+t^{10}.
 \]
 Then $\mathcal{C}on \, \Phi$ exists, but $\Phi$ is not equimultiple.
 \end{remark}
  We define in this way the functors
\[
\mathcal{C}on:  \overset{\to}{\mathcal{D}ef_{\varphi}} \to  \widehat{\mathcal{D}ef}_{\psi}, \qquad
\pi:  \widehat{\mathcal{D}ef}_{\psi} \to \mathcal{D}ef_{\varphi}.
\]
Notice that the conormal of the plane projection of a Legendrian deformation always exists and we have that $\mathcal{C}on \, (\Psi^\pi)=\Psi$ for each $\Psi \in \widehat{\mathcal{D}ef}_{\psi}$ and $(\mathcal{C}on \, \Phi)^\pi=\Phi$ where $\Phi \in \overset{\to}{\mathcal{D}ef_{\varphi}}$.

Let us denote by $\overset{\twoheadrightarrow}{\mathcal{D}ef}_{\varphi}$ the subcategory of equimultiple deformations $\Phi$ of $\f$ such that all fibres of $\Phi$ have fixed tangent space $\{y=0\}$ with conormal in generic position. Then $\overset{\twoheadrightarrow}{\mathcal{D}ef}_{\varphi} \subset \overset{\to}{\mathcal{D}ef_{\varphi}}$ and if $\Phi \in \overset{\to}{\mathcal{D}ef_{\varphi}}$ is given by \ref{PHITS}, then $\Phi \in \overset{\twoheadrightarrow}{\mathcal{D}ef}_{\varphi}$ iff
\begin{equation}\label{E:COND1}
ord_{t_i} \, Y_i \geq 2ord_{t_i} \, X_i, \qquad 1\leq i \leq r.
\end{equation}
Because we demand that $\Phi$ is equimultiple and all branches have tangent space $\{y=0\}$, \ref{E:COND1} is equivalent to  
\begin{equation}\label{E:COND2}
ord_{t_i} \, Y_i \geq 2m_i, \qquad 1\leq i \leq r,
\end{equation}
where $m_i$ is the multiplicity of the component $Z_i$ of $Z$.

\begin{lemma}\label{L:LEMAEM}
Under the assumptions above, 
\[
\mathcal{C}on \,(\overset{\twoheadrightarrow}{\mathcal{D}ef}_{\varphi}) \subset \widehat{\mathcal{D}ef}^{\,em}_{\psi} \quad \text{and} \quad
(\widehat{\mathcal{D}ef}^{\,em}_{\psi})^\pi \subset \overset{\twoheadrightarrow}{\mathcal{D}ef}_{\varphi}.
\]
\end{lemma}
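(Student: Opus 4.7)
\emph{Plan.} The proof will rest on Remark \ref{R:POSITION}(v), applied fiber-by-fiber: for a plane branch $Z$ and its conormal $L$, one has $mult\, L \leq mult\, Z$, with equality exactly when $ord\, y \geq 2\, ord\, x$. I would fix a branch index $i$, set $m_i = mult(Z_i)$, and denote by $Z_i^{\mathbf s}$, $L_i^{\mathbf s}$ the branches of the corresponding fibers of $\Phi$ and $\Psi$ over a parameter value $\mathbf s$ near $0$.

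For the first inclusion, given $\Phi \in \overset{\twoheadrightarrow}{\mathcal{D}ef}_{\varphi}$ presented by (\ref{PHITS}), I would observe that equimultiplicity of $\Phi$ together with the fixed-tangent condition gives $ord_{t_i} X_i(t_i, \mathbf s) = m_i$ for every $\mathbf s$, while condition (\ref{E:COND2}) forces $ord_{t_i} Y_i(t_i, \mathbf s) \geq 2m_i$ for every $\mathbf s$. Applying Remark \ref{R:POSITION}(v) fiber by fiber yields $mult(L_i^{\mathbf s}) = mult(Z_i^{\mathbf s}) = m_i = mult(L_i^0)$, so $\mathcal{C}on\, \Phi$ is equimultiple and hence lies in $\widehat{\mathcal{D}ef}^{\,em}_{\psi}$.

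For the second inclusion, given $\Psi \in \widehat{\mathcal{D}ef}^{\,em}_{\psi}$ I would set $\Phi = \Psi^\pi$ and argue as follows. Since $\psi$ is in generic position, $mult(L_i^0) = m_i$, and equimultiplicity of $\Psi$ propagates this to $mult(L_i^{\mathbf s}) = m_i$ for every $\mathbf s$. Remark \ref{R:POSITION}(v) then gives $mult(Z_i^{\mathbf s}) \geq m_i$, while upper semicontinuity of multiplicity for the plane deformation $\Phi$ (which follows from the fact that the leading coefficients of $X_i$ and $Y_i$ at $\mathbf s = 0$ remain nonzero nearby) gives $mult(Z_i^{\mathbf s}) \leq m_i$; hence $\Phi$ is equimultiple. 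The equality clause of (v) then forces $ord_t Y_i(t, \mathbf s) \geq 2\, ord_t X_i(t, \mathbf s) = 2m_i$ at every $\mathbf s$, so by Remark \ref{R:POSITION}(i) each fiber of $\Phi$ has tangent cone $\{y=0\}$.

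The step I expect to require the most care is promoting the pointwise bound $ord_t Y_i(t, \mathbf s) \geq 2m_i$ to the bound $ord_{t_i} Y_i \geq 2m_i$ in $\C\{t_i, \mathbf s\}$ required by (\ref{E:COND2}). I would handle this by expanding $Y_i = \sum_k b_k(\mathbf s)\, t_i^k$ with $b_k \in \C\{\mathbf s\}$: any nonzero $b_k$ with $k < 2m_i$ would be nonzero on a dense open set of parameters, contradicting the pointwise bound there. Combined with the equivalence (\ref{E:COND1}) $\Leftrightarrow$ (\ref{E:COND2}) recorded in the text, this places $\Phi$ in $\overset{\twoheadrightarrow}{\mathcal{D}ef}_{\varphi}$, finishing the argument.
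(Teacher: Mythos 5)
Your outline does track the paper's own proof fairly closely -- both directions hinge on Remark \ref{R:POSITION}(v) and on semicontinuity of multiplicity -- but the step you yourself flag as the delicate one is exactly where the argument breaks down, and the break is fatal for the way the lemma is used. The base of a deformation here is an arbitrary germ of a complex analytic space, in general \emph{non-reduced}: the lemma is invoked over the fat point $T_\e$ in the proof of Theorem \ref{T:DEFINFEQUI}, and over Artinian bases and small extensions in the formal versality argument of Theorem \ref{T:VERSALEQUI}. Over such a base $T$, equimultiplicity and conditions (\ref{E:COND1}), (\ref{E:COND2}) must be read at the ring level, e.g.\ $Y_i \in t_i^{2m_i}\mathcal{O}_T\{t_i\}$, where nilpotent coefficients matter, whereas your argument is entirely fiberwise (``for every $\mathbf{s}$ near $0$''). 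Over $T_\e$ the only point is the closed point, every nilpotent coefficient vanishes there, and your key claim -- ``any nonzero $b_k$ with $k<2m_i$ would be nonzero on a dense open set of parameters'' -- is false: $Y(t,\e)=t^{2m}+\e t^{m+1}$ has $b_{m+1}=\e \neq 0$ in $\mathcal{O}_{T_\e}$, yet $b_{m+1}$ vanishes at the unique point, so the fiberwise bound holds while (\ref{E:COND2}) fails. The same defect infects your first inclusion (you conclude only that the fibers of $\mathcal{C}on\,\Phi$ have constant multiplicity, not that $P_i \in t_i^{m_i}\mathcal{O}_T\{t_i\}$) and makes ``upper semicontinuity of multiplicity'' contentless over a fat point.

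The repair is to argue at the ring level throughout, which is what the paper's pivotal inequalities do. For the first inclusion: (\ref{E:COND2}) gives $Y_i \in t_i^{2m_i}\mathcal{O}_T\{t_i\}$, and equimultiplicity with fixed tangent $\{y=0\}$ gives $\partial_{t_i}X_i = t_i^{m_i-1}\cdot(\text{unit})$ in $\mathcal{O}_T\{t_i\}$; hence $P_i=\partial_{t_i}Y_i/\partial_{t_i}X_i \in t_i^{m_i}\mathcal{O}_T\{t_i\}$, so $\mathcal{C}on\,\Phi$ is equimultiple. For the second inclusion: equimultiplicity of $\Psi$ gives $P_i \in t_i^{m_i}\mathcal{O}_T\{t_i\}$, i.e.\ $ord_{t_i}P_i \geq ord_{t_i}X_i$, and the relation $\partial_{t_i}Y_i = P_i\,\partial_{t_i}X_i$ (which encodes that $\Psi=\mathcal{C}on(\Psi^\pi)$) then yields $Y_i \in t_i^{2m_i}\mathcal{O}_T\{t_i\}$, i.e.\ (\ref{E:COND2}); since generic position of $\psi$ gives $mult\,Z_i = mult\,L_i = m_i$ by Remark \ref{R:POSITION}(v), equimultiplicity of $\Psi^\pi$ and the tangency/genericity of its fibers follow at once, with no appeal to density of parameters or to semicontinuity. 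Phrased this way the statement holds over every base germ, which is what the rest of the paper requires.
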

\begin{proof}
Let $m_i$ be the multiplicity of the component $Z_i$ of $Z$. Let $Z_{i,s} [L_{i,s}]$ be the fiber of $\Phi [\Psi]$ (given by \ref{PHITS} [\ref{PSITS}]) at $s$  . If $\Phi \in \overset{\twoheadrightarrow}{\mathcal{D}ef}_{\varphi}$, $C(L_{i,s}) \not\supset \pi^{-1}(0,0)$ for each $s$, so  $ord_{t_i} \, Y_i \geq 2ord_{t_i} \, X_i=2m_i$. Hence $ord_{t_i} \, P_i \geq m_i$ and $\Psi$ is equimultiple.

If $\Psi \in \widehat{\mathcal{D}ef}^{\,em}_{\psi}$, $ord_{t_i} \, P_i \geq  ord_{t_i} \, X_i$ and we get that $C(L_{i,s}) \not\supset \pi^{-1}(0,0)$ for each $s$. Each component $L_{i,s}$ has multiplicity $m_i$ for each $s$. Hence $mult \, Z_{i,s} \geq m_i$ for each $s$. Since multiplicity is semicontinuous, $mult \, Z_{i,s} = m_i$ for each $s$ and $\Phi$ is equimultiple.
\end{proof}

\begin{lemma}
If $\mathfrak{C}$ is one of the categories $\widehat{\mathcal{D}ef}_{\psi}$, $\widehat{\mathcal{D}ef}^{\,em}_{\psi}$, $p: \mathfrak{C} \to \mathfrak{A}n$ is a fibered groupoid.
\end{lemma}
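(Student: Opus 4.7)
By Lemma \ref{L:DESCARTES}, it suffices to verify (i) that each fiber category $\mathfrak{C}(S)$ is a groupoid, and (ii) that for every morphism $f : S' \to S$ in $\mathfrak{A}n$ and every $\Psi \in \mathfrak{C}(S)$ there is a cartesian arrow over $f$ with target $\Psi$. The argument will proceed in parallel for $\widehat{\mathcal{D}ef}_{\psi}$ and $\widehat{\mathcal{D}ef}^{\,em}_{\psi}$; the only extra bookkeeping in the equimultiple case concerns the multiplicity of each branch.

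For (i), a morphism $(\chi,\xi)$ of $\mathfrak{C}(S)$ is, by the definition of a morphism of deformations, a pair of analytic isomorphisms with the extra requirement that $\chi$ be a relative contact transformation. It therefore suffices to check that the analytic inverse $\chi^{-1}$ is again a relative contact transformation. Using the normal form of Remark \ref{R:CHIII2}, $\chi(x,y,p,\mathbf{z})=(x+\alpha,y+\beta,p+\gamma,\mathbf{z})$ with $\alpha,\beta,\gamma \in \mathfrak{n}$, so $\chi^{-1}$ has the same shape and still fixes the trivial section; moreover, if $\chi^*\omega = h\,\omega$ for a unit $h$, then $(\chi^{-1})^*\omega = h^{-1}\omega$ lies in $\mathcal{L}_S$, giving the invertibility of $(\chi,\xi)$ inside $\mathfrak{C}(S)$.

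For (ii), I take $\Psi \in \mathfrak{C}(S)$ with components $\Psi_i$ and define $\Psi' = f^*\Psi$ branch-by-branch via $\Psi'_i = \Psi_i \circ (id_{\bar{C}_i} \times f)$, matching the pullback already defined for the ambient category of deformations. The fibers of $\Psi'$ at $s' \in S'$ coincide with those of $\Psi$ at $f(s')$ and are thus Legendrian; equimultiplicity is a pointwise condition on the multiplicities of the branch fibers and is likewise preserved. The natural arrow $\Psi' \to \Psi$ over $f$ is cartesian by unwinding the definitions: a morphism $\Phi \to \Psi$ over $f$ is by construction exactly a morphism $\Phi \to f^*\Psi = \Psi'$ over $id_{S'}$, which supplies the unique factorization through $\Psi' \to \Psi$.

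I expect the main obstacle to be the verification that the class of relative contact transformations is closed under analytic inversion; once that is settled using the explicit normal form of Remark \ref{R:CHIII2}, the remainder of the argument is formal manipulation with pullbacks and fibers.
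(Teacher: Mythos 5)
Your proposal is correct and takes essentially the same approach as the paper: reduce via Lemma \ref{L:DESCARTES} to exhibiting the branch-wise pullback $f^\ast \Psi \to \Psi$ (with identity maps $\widetilde{\chi},\widetilde{\xi}$) as a cartesian arrow, cartesianness following because a morphism over $f$ is, by definition, a morphism into $f^\ast\Psi$ over the identity. The only difference is that you also verify explicitly that relative contact transformations are closed under inversion (the groupoid condition on fibers), a point the paper treats as immediate from the definitions.
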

\begin{proof}
Let $f:S \to T$ be a morphism of $\mathfrak{A}n$. Let $\Psi$ be a deformation over $T$. Then, $(\widetilde{\chi},\widetilde{\xi}): f^\ast \Psi \to \Psi$ is cartesian, with
\[
\widetilde{\xi}(t_i,\mathbf{s})=(t_i,\mathbf{s}), \qquad \widetilde{\chi}(x,y,p,\mathbf{s})=(x,y,p,\mathbf{s}).
\]

This is because if $(\chi,\xi):  \Psi' \to \Psi$ is a morphism over $f$, then by definition of morphism of deformations over different base spaces, $(\chi,\xi)$ is a morphism from $\Psi'$ into $f^\ast \Psi$ over $id_S$.
\end{proof}


\section{Equimultiple Versal Deformations}\label{S2.3}

For Sophus Lie a contact transformation was a transformation that takes curves into curves, instead of points into points. We can recover the initial point of view. Given a plane curve $Z$ at the origin, with tangent cone $\{y=0\}$, and a contact transformation $\chi$ from a neighbourhood of $(0;dy)$ into itself, $\chi$ acts on $Z$ in the following way: $\chi \cdot Z$ is the plane projection of the image by $\chi$ of the conormal of $Z$. We can define in a similar way the action of a relative contact transformation on a deformation of a plane curve $Z$, obtainning another deformation of $Z$.

We say that $\Phi \in \overset{\twoheadrightarrow}{\mathcal{D}ef}_{\varphi} (T)$ is \emph{trivial} (relative to the action of the group of relative contact transformations over $T$) if there is $\chi$ such that $\chi \cdot \Phi:=\pi \circ \chi \circ \mathcal{C}on \, \Phi$ is the constant deformation of $\phi$ over $T$, given by 
\[
(t_i,\mathbf{s}) \mapsto \varphi_i(t_i), \qquad i=1,\ldots,r.
\]
Let $Z$ be the germ of a plane curve parametrized by $\f : \bar{\C} \to \C^2$. In the following we will identify each ideal of $\mathcal{O}_Z$ with its image by $\f^\ast : \mathcal{O}_Z \to \mathcal{O}_{\bar{\C}}$. Hence
\[
\mathcal{O}_Z = \C \left\{ 
\left[
\begin{matrix} 
x_1\\ \vdots \\x_r
\end{matrix}
\right],
\left[
\begin{matrix} 
y_1\\ \vdots \\y_r
\end{matrix}
\right]
\right\}
 \subset \bigoplus_{i=1}^r \C\{t_i\}=\mathcal{O}_{\bar{\C}}.
\]				
Set $\dot{\mathbf{x}}=\left[ \dot{x}_1,\ldots,\dot{x}_r\right]^t$, where $\dot{x}_i$ is the derivative of $x_i$ in order to $t_i$, $1\leq i \leq r$. Let
\[
\dot{\varphi} := \dot{\mathbf{x}}\frac{\partial}{\partial x} +  \dot{\mathbf{y}}\frac{\partial}{\partial y}
\]
be an element of the free $\mathcal{O}_{\bar{\C}}$-module
\begin{equation}\label{E:BIG}
\mathcal{O}_{\bar{\C}}\frac{\partial}{\partial x} \oplus \mathcal{O}_{\bar{\C}}\frac{\partial}{\partial y}.
\end{equation}

Notice that (\ref{E:BIG}) has a structure of $\mathcal{O}_Z$-module induced by $\f^\ast$.

Let $m_i$ be the multiplicity of $Z_i$, $1\leq i \leq r$. Consider the $\mathcal{O}_{\bar{\C}}$-module
\begin{equation}\label{E:SECOND}
\left(\bigoplus_{i=1}^r t^{m_i}_i\C\{t_i\} \frac{\partial}{\partial x}\right) \oplus  \left(\bigoplus_{i=1}^r t^{2m_i}_i\C\{t_i\}\frac{\partial}{\partial y}\right).
\end{equation}
Let $\mathfrak{m}_{\bar{\C}}\dot{\f}$ be the sub $\mathcal{O}_{\bar{\C}}$-module of (\ref{E:SECOND}) generated by
\[
(a_1,\ldots,a_r)\left( \dot{\mathbf{x}}\frac{\partial}{\partial x} +  \dot{\mathbf{y}}\frac{\partial}{\partial y} \right),
\]
where $a_i \in t_i\C\{t_i\},\, 1 \leq i \leq r$. For $i=1,\ldots,r$ set $p_i=\dot{y}_i / \dot{x}_i$. For each $k \geq 0$ set
\[
\mathbf{p}^k = \left[ p_1^k,\ldots,p_r^k\right]^t.
\]
Let $ \widehat{I}$ be the sub $\mathcal{O}_Z$-module of (\ref{E:SECOND}) generated by
\[
\mathbf{p}^k  \frac{\partial}{\partial x} + \frac{k}{k+1}\mathbf{p}^{k+1}\frac{\partial}{\partial y}, \qquad k \geq 1.
\]
Set
\[
\widehat{M}_\f=\frac{ \left(\bigoplus_{i=1}^r t^{m_i}_i\C\{t_i\} \frac{\partial}{\partial x}\right) \oplus  \left(\bigoplus_{i=1}^r t^{2m_i}_i\C\{t_i\}\frac{\partial}{\partial y}\right)}{\mathfrak{m}_{\bar{\C}}\dot{\f} + (x,y)\frac{\partial}{\partial x} \oplus (x^2,y)\frac{\partial}{\partial y} + \widehat{I}}.
\]

Given a category $\mathfrak{C}$ we will denote by $\underline{\mathfrak{C}}$ the set of isomorphism classes of elements of $\mathfrak{C}$.

\begin{theorem}\label{T:DEFINFEQUI}
Let $\psi$ be the parametrization of a germ of a Legendrian curve $L$ of a contact manifold $X$. Let $\chi : X \to \C^3$ be a contact transformation such that $\chi(L)$ is in generic position. Let $\f$ be the plane projection of $\chi \circ \psi$. Then there is a canonical isomorphism
\[
\widehat{\underline{\mathcal{D}ef}}^{\,em}_{\,\psi} (T_\e) \xrightarrow{\sim} \widehat{M}_\f.
\]
\end{theorem}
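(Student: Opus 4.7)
The strategy is to transfer the problem from equimultiple Legendrian deformations to plane deformations via the functors $\mathcal{C}on$ and $\pi$, then compute the induced infinitesimal action of the symmetry group explicitly. Using $\chi$ we may assume $\psi=\mathcal{C}on\,\varphi$ in $(\mathbb{C}^3,\omega)$. By Lemma~\ref{L:LEMAEM} and the identities $\mathcal{C}on\,(\Psi^\pi)=\Psi$ and $(\mathcal{C}on\,\Phi)^\pi=\Phi$, plane projection gives a canonical bijection between objects of $\widehat{\mathcal{D}ef}^{\,em}_\psi(T_\varepsilon)$ and objects of $\overset{\twoheadrightarrow}{\mathcal{D}ef}_\varphi(T_\varepsilon)$. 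Writing such a $\Phi$ over $T_\varepsilon$ as $\Phi_i(t_i,\varepsilon)=(x_i+\varepsilon\dot x_i,\,y_i+\varepsilon\dot y_i)$, the conditions of trivial section, fixed tangent cone $\{y=0\}$, equimultiplicity and \eqref{E:COND2} translate precisely to $\dot x_i\in t_i^{m_i}\mathbb{C}\{t_i\}$ and $\dot y_i\in t_i^{2m_i}\mathbb{C}\{t_i\}$; this identifies the set of such deformations with the numerator of $\widehat M_\varphi$.

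Next I would classify the morphisms. A Legendrian isomorphism $(\widehat\chi,\xi)$ over $T_\varepsilon$ consists of a source reparametrization $\xi_i(t_i,\varepsilon)=t_i+\varepsilon a_i$ with $a_i\in t_i\mathbb{C}\{t_i\}$ and of a relative contact transformation $\widehat\chi$. Expanding $\Phi_i\circ\xi_i$ to first order shows that the source part alters $\dot\Phi$ by $(a_i\dot x_i\,\partial_x+a_i\dot y_i\,\partial_y)_i\in\mathfrak m_{\bar{\mathbb C}}\dot\varphi$. By Theorem~\ref{T:TE}, $\widehat\chi$ is uniquely parametrized by a pair $(\alpha,\beta_0)$ with $\alpha\in\mathfrak m_X$, $\beta_0\in\mathfrak m_X\cap(x^2,y)$ and $\partial_p\beta_0=0$, and acts as $(x,y,p)\mapsto(x+\varepsilon\alpha,y+\varepsilon\beta,p+\varepsilon\gamma)$. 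The $p$-coordinate of $\mathcal{C}on\,\Phi'$ is $\mathbf p:=\dot{\mathbf y}/\dot{\mathbf x}$ to first order in $\varepsilon$, so after plane projection the shift of $\dot\Phi$ is $\alpha(\varphi,\mathbf p)\,\partial_x+\beta(\varphi,\mathbf p)\,\partial_y$. Expanding $\alpha=\sum_k\alpha_kp^k$ and using the relations $\beta_k=\tfrac{k-1}{k}\alpha_{k-1}$ ($k\ge1$) from Remark~\ref{R:BETAZERO}, this shift becomes
\[
\alpha_0\,\partial_x\;+\;\beta_0\,\partial_y\;+\;\sum_{k\ge 1}\alpha_k\!\left(\mathbf p^k\,\partial_x+\tfrac{k}{k+1}\mathbf p^{k+1}\,\partial_y\right),
\]
whose three summands lie in $(x,y)\partial_x$, $(x^2,y)\partial_y$ and $\widehat I$ respectively, since $\alpha_0\in(x,y)$, $\beta_0\in(x^2,y)$ and $\alpha_k\in\mathbb C\{x,y\}\subset\mathcal O_Z$ for $k\ge1$. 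Conversely, every generator of these three modules arises from an admissible $(a_i)$ or $(\alpha,\beta_0)$, and Theorem~\ref{T:TE}(b) ensures each such $(\alpha,\beta_0)$ is realized by an actual relative contact transformation.

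Assembling the two previous paragraphs, $\Psi\mapsto\dot{(\Psi^\pi)}$ induces a well-defined, $\mathbb C$-linear, canonical bijection $\widehat{\underline{\mathcal{D}ef}}^{\,em}_\psi(T_\varepsilon)\xrightarrow{\sim}\widehat M_\varphi$. The main technical obstacle I anticipate is the careful bookkeeping that turns $\alpha(\varphi,\mathbf p)\,\partial_x+\beta(\varphi,\mathbf p)\,\partial_y$ into the three matching pieces of the denominator; this requires invoking Remark~\ref{R:BETAZERO} to determine $\beta$ in terms of $\beta_0$ and the $\alpha_{k-1}$, and checking that for $k\ge 1$ the coefficient $\tfrac{k-1}{k}$ in $\beta$ reindexes exactly to the coefficient $\tfrac{k}{k+1}$ appearing in the generators of $\widehat I$, with no further relations appearing. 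A secondary subtlety is that the contact action is computed on the conormal, so one must verify that replacing $p$ by the perturbed $p$-coordinate of $\mathcal Con\,\Phi'$ only affects the shift at order $\varepsilon^2$, which is negligible over $T_\varepsilon$.
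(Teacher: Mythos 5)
Your proposal is correct and follows essentially the same route as the paper's own proof: reduce to the plane projection via Lemma~\ref{L:LEMAEM}, expand the trivializing reparametrization and relative contact transformation to first order in $\e$ using Taylor's formula, and invoke Remark~\ref{R:BETAZERO}~(i) (via Theorem~\ref{T:TE}) to identify the trivializing shifts with the denominator $\mathfrak{m}_{\bar{\C}}\dot{\f} + (x,y)\frac{\partial}{\partial x} \oplus (x^2,y)\frac{\partial}{\partial y} + \widehat{I}$. Your explicit reindexing of $\sum_k \beta_k \mathbf{p}^k$ into the generators $\mathbf{p}^k\frac{\partial}{\partial x}+\tfrac{k}{k+1}\mathbf{p}^{k+1}\frac{\partial}{\partial y}$ of $\widehat{I}$ is exactly the step the paper compresses into its final sentence, so no gap remains.
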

\begin{proof}
Let $\Psi \in \widehat{\mathcal{D}ef}^{\,em}_{\psi} (T_\e)$. By Lemma~\ref{L:LEMAEM}, $\Psi$ is the conormal of its projection $\Phi \in \overset{\twoheadrightarrow}{\mathcal{D}ef}_{\varphi} (T_\e)$. Moreover, $\Psi$ is given by
\[
\Psi_i(t_i,\e)=(x_i + \e a_i,y_i + \e b_i, p_i + \e c_i),
\]
where $a_i,b_i,c_i \in \C\{t_i\},\, ord\,a_i \geq m_i,\, ord \, b_i \geq 2m_i,\, i=1, \ldots,r$. The deformation $\Psi$ is trivial if and only if $\Phi$ is trivial for the action of the relative contact transformations. $\Phi$ is trivial if and only if there are 
\begin{align*}
\xi_i(t_i)&=\widetilde{t}_i=t_i + \e h_i,\\
\chi(x,y,p,\e)&=(x+\e \alpha,y + \e \beta, p+ \e \gamma,\e),
\end{align*}
such that $\chi$ is a relative contact transformation, $\xi_i$ is an isomorphism, $\alpha,\beta, \gamma \in (x,y,p)\C\{x,y,p\},\, h_i \in t_i\C\{t_i\},\, 1 \leq i \leq r$, and
\begin{align*}
x_i(t_i) + \e a_i(t_i)&=x_i(\widetilde{t}_i)+\e \alpha(x_i(\widetilde{t}_i),y_i(\widetilde{t}_i),p_i(\widetilde{t}_i)),\\
y_i(t_i) + \e b_i(t_i)&=y_i(\widetilde{t}_i)+\e \beta(x_i(\widetilde{t}_i),y_i(\widetilde{t}_i),p_i(\widetilde{t}_i)),
\end{align*}
for $i=1,\ldots,r$. By Taylor's formula $x_i(\widetilde{t}_i)=x_i(t_i) + \e \dot{x_i}(t_i)h_i(t_i), \, y_i(\widetilde{t}_i)=y_i(t_i) + \e \dot{y_i}(t_i)h_i(t_i)$ and 
\begin{align*}
\e \alpha(x_i(\widetilde{t}_i),y_i(\widetilde{t}_i),p_i(\widetilde{t}_i))&=\e \alpha(x_i(t_i),y_i(t_i),p_i(t_i)), \\ 
\e \beta(x_i(\widetilde{t}_i),y_i(\widetilde{t}_i),p_i(\widetilde{t}_i))&=\e \beta(x_i(t_i),y_i(t_i),p_i(t_i)),
\end{align*}
for $i=1,\ldots,r$. Hence $\Phi$ is trivialized by $\chi$ if and only if
\begin{align}
a_i(t_i)&=\dot{x_i}(t_i)h_i(t_i) + \alpha(x_i(t_i),y_i(t_i),p_i(t_i)), \label{E:AAAA} \\
b_i(t_i)&=\dot{y_i}(t_i)h_i(t_i) + \beta(x_i(t_i),y_i(t_i),p_i(t_i)), \label{E:BBBB}
\end{align}
for $i=1,\ldots,r$. By Remark~\ref{R:BETAZERO} (i), (\ref{E:AAAA}) and (\ref{E:BBBB}) are equivalent to the condition
\[
\mathbf{a}\frac{\partial}{\partial x} +  \mathbf{b}\frac{\partial}{\partial y} \in \mathfrak{m}_{\bar{\C}}\dot{\f} + (x,y)\frac{\partial}{\partial x} \oplus (x^2,y)\frac{\partial}{\partial y} + \widehat{I}.
\]
\end{proof}
Set 
\begin{align*}
M_\f&=\frac{ \left(\bigoplus_{i=1}^r t^{m_i}_i\C\{t_i\} \frac{\partial}{\partial x}\right) \oplus  \left(\bigoplus_{i=1}^r t^{m_i}_i\C\{t_i\}\frac{\partial}{\partial y}\right)}{\mathfrak{m}_{\bar{\C}}\dot{\f} + (x,y)\frac{\partial}{\partial x} \oplus (x,y)\frac{\partial}{\partial y}},\\
\overset{\twoheadrightarrow}{M}_\f&=\frac{ \left(\bigoplus_{i=1}^r t^{m_i}_i\C\{t_i\} \frac{\partial}{\partial x}\right) \oplus  \left(\bigoplus_{i=1}^r t^{2m_i}_i\C\{t_i\}\frac{\partial}{\partial y}\right)}{\mathfrak{m}_{\bar{\C}}\dot{\f} + (x,y)\frac{\partial}{\partial x} \oplus (x^2,y)\frac{\partial}{\partial y}}.
\end{align*}
By Proposition~$2.27$ of \cite{Greuel-book},
\[
\underline{\mathcal{D}ef}^{\,em}_{\, \f} (T_\e) \cong M_\f.
\]
A similar argument shows that
\[
 \overset{\twoheadrightarrow}{\mathcal{D}ef}_{\varphi} (T_\e) \cong \overset{\twoheadrightarrow}{M}_\f.
\]
We have linear maps
\begin{equation}\label{E:MANM}
M_\f \overset{\imath}{\hookleftarrow}\overset{\twoheadrightarrow}{M}_\f \twoheadrightarrow \widehat{M}_\f.
\end{equation}

\begin{theorem}[\cite{Greuel-book}, II Theorem $2.38$ $(3)$] \label{T:DEFGREUEL}
Set $k=dim\, M_\f$. Let $ \mathbf{a}^j, \mathbf{b}^j \in \bigoplus_{i=1}^r t^{m_i}_i\C\{t_i\}, \, 1 \leq j \leq k$. If
\begin{equation}\label{E:BASIS}
\mathbf{a}^j \frac{\partial}{\partial x} + \mathbf{b}^j \frac{\partial}{\partial y}= \left[
\begin{matrix} 
a_1^j\\ \vdots \\a_r^j
\end{matrix}
\right]\frac{\partial}{\partial x} +
\left[
\begin{matrix} 
b_1^j\\ \vdots \\b_r^j
\end{matrix}
\right]\frac{\partial}{\partial y},
\end{equation}
$ 1 \leq j \leq k$, represents a basis of $M_\f$, the deformation $\Phi: \bar{\C} \times\C^k \to \C^2 \times \C^k$ given by
\begin{equation}\label{E:DEFXYG}
 X_i(t_i,{\bf s})= x_i(t_i) + \sum_{j=1}^{k} a_i^j(t_i)s_j,\;
 Y_i(t_i,{\bf s})= y_i(t_i) + \sum_{j=1}^{k} b_i^j(t_i)s_j, 
\end{equation}
$i=1,\ldots,r$, is a semiuniversal deformation of $\f$ in $\mathcal{D}ef^{\, em}_{\,\f}$.
\end{theorem}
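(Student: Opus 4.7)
The plan is to invoke Theorem \ref{T:FLENNER}: since the existence of a versal deformation in $\mathcal{D}ef^{\,em}_{\,\f}$ is guaranteed (this being the classical Greuel--Laudal--Teissier setting for equimultiple deformations of parametrized plane curve singularities), it suffices to verify that $\Phi$ is \emph{formally semiuniversal}. The two preliminary checks are immediate: setting $\mathbf{s}=0$ recovers $\f$, and since each $a_i^j,b_i^j$ lies in $t_i^{m_i}\C\{t_i\}$, the multiplicity of every component $Z_{i,\mathbf{s}}$ equals $m_i$, so $\Phi \in \mathcal{D}ef^{\,em}_{\,\f}(\C^k)$.

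Next I would identify the Kodaira--Spencer map. For each $j$, pulling $\Phi$ back along the inclusion $T_\e \hookrightarrow \C^k$ corresponding to the tangent vector $\partial/\partial s_j$ produces the first-order deformation
\[
\bigl(x_i(t_i)+\e a_i^j(t_i),\ y_i(t_i)+\e b_i^j(t_i)\bigr)_{i=1,\dots,r},
\]
whose class in $\underline{\mathcal{D}ef}^{\,em}_{\,\f}(T_\e)\cong M_\f$ (the isomorphism coming from Proposition~2.27 of \cite{Greuel-book}) is exactly $\mathbf{a}^j\frac{\partial}{\partial x}+\mathbf{b}^j\frac{\partial}{\partial y}$. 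Since these classes form a basis of $M_\f$ by hypothesis, the Kodaira--Spencer map $T_0\C^k \to M_\f$ is an isomorphism. This yields formal versality at first order and, simultaneously, uniqueness of the tangent map (the semiuniversality requirement of Definition~\ref{D:FORMALVERSALDEF}).

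To go from the first order to all orders I would factor any extension $\mathcal{O}_{T'_{n+1}} \twoheadrightarrow \mathcal{O}_{T'_n}$ as a finite composition of small extensions (see the Remark following Theorem~\ref{T:RCKT}) and proceed inductively. Given a morphism $h_n:T'_n \to \C^k$ with $h_n^\ast \Phi \cong \alpha_n'^\ast \Phi'$ and a small extension $T'_n \hookrightarrow T'_{n+1}$ with kernel $\e \C$, the obstruction to lifting $h_n$ to $h_{n+1}$ respecting $\Phi'$ is measured by the difference class in $M_\f\otimes \e\C$ between an arbitrary set-theoretic lift and $\alpha_{n+1}'^\ast \Phi'$; surjectivity of the Kodaira--Spencer map lets us modify $h_{n+1}$ by a linear term in the $s_j$'s so that this difference is killed, while the constraint $h_n\circ f_n = g_n$ is preserved because $f$ is a closed embedding and the modifications live in the ideal of $T''$.

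The main obstacle I expect is the inductive step: one must check that at each small extension the correction to $h_{n+1}$ respects the equimultiple constraints (i.e.\ produces a trivializing isomorphism of \emph{equimultiple} deformations, not merely of deformations). This amounts to verifying that the obstruction class genuinely lies in the equimultiple tangent module $M_\f$ rather than in the larger module of all first-order deformations, which in turn follows from the fact that both $\Phi$ and $\Phi'$ are equimultiple, so any coboundary trivialising their difference automatically lies in the relevant submodule $\mathfrak{m}_{\bar\C}\dot\f + (x,y)\frac{\partial}{\partial x}\oplus (x,y)\frac{\partial}{\partial y}$. Once this is checked, Theorem \ref{T:FLENNER} promotes formal semiuniversality to genuine semiuniversality, completing the proof.
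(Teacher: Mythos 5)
The paper offers no proof of this statement at all: as the bracketed attribution indicates, it is imported wholesale from \cite{Greuel-book} (II, Theorem 2.38(3)), so there is no internal argument to compare yours against. What your sketch actually reproduces is the template the paper uses for its genuinely new results: secure the existence of some versal object so that Flenner's criterion (Theorem \ref{T:FLENNER}) applies (the role played by Lemma \ref{L:LEMAVERSAL}), then verify formal (semi)universality by lifting across small extensions, with the Kodaira--Spencer isomorphism onto the tangent module $M_\f$ (via Proposition 2.27 of \cite{Greuel-book}) doing the work --- this is precisely the structure of the paper's proof of Theorem \ref{T:VERSALEQUI} in the Legendrian setting.

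Your outline is correct, but the two steps that carry the real content are only asserted. First, the existence of a versal deformation in $\mathcal{D}ef^{\,em}_{\,\f}$, which Flenner's theorem requires as a hypothesis, is itself part of the theorem being proved; to avoid circularity you must obtain it from general deformation theory (Schlessinger-type conditions for the equimultiple functor together with Grauert's existence theorem), not from ``the classical setting'' at large. Second, your obstruction step uses without proof that, over a small extension with kernel $\e\C$, isomorphism classes of equimultiple liftings of a fixed equimultiple deformation form a torsor under $M_\f\otimes\e\C$, and that the correcting class can be chosen in the ideal of $T''$ so that the constraint $h\circ f=g$ survives. This homogeneity property is exactly what \cite{Greuel-book} --- and the paper itself, in the proof of Theorem \ref{T:VERSALEQUI} --- establishes by explicit power-series manipulation of the parametrization; your closing claim that the trivializing coboundary ``automatically'' lies in the equimultiple submodule is the assertion that needs that computation rather than a substitute for it. With those two points filled in from the standard references, your argument is a complete proof, though one that reproves the cited result rather than matching any proof present in the paper.
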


\begin{lemma}\label{L:LEMAVERSAL}
Set $\overset{\twoheadrightarrow}{k}=dim\, \overset{\twoheadrightarrow}{M}_\f$. Let $ \mathbf{a}^j \in \bigoplus_{i=1}^r t^{m_i}_i\C\{t_i\}, \, \mathbf{b}^j \in \bigoplus_{i=1}^r t^{2m_i}_i\C\{t_i\}$, $1 \leq j \leq \overset{\to}{k}$. If (\ref{E:BASIS}) represents a basis of $\overset{\twoheadrightarrow}{M}_\f$, the deformation $\overset{\twoheadrightarrow}{\Phi}$ given by (\ref{E:DEFXYG}), $1 \leq i \leq r$, is a semiuniversal deformation of $\f$ in $ \overset{\twoheadrightarrow}{\mathcal{D}ef}_{\varphi}$. Moreover, $\mathcal{C}on \, \overset{\twoheadrightarrow}{\Phi}$ is a versal deformation of $\psi$ in $\widehat{\mathcal{D}ef}^{\,em}_{\, \psi}$.
\end{lemma}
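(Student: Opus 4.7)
The strategy is to first establish semiuniversality of $\overset{\twoheadrightarrow}{\Phi}$ in the plane category $\overset{\twoheadrightarrow}{\mathcal{D}ef}_{\varphi}$ by adapting Greuel's argument, and then transport versality to the Legendrian side through the equivalence $\mathcal{C}on \leftrightarrow \pi$.

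\emph{Step 1: Semiuniversality of $\overset{\twoheadrightarrow}{\Phi}$ in $\overset{\twoheadrightarrow}{\mathcal{D}ef}_{\varphi}$.} By construction the Kodaira--Spencer map $T_0\C^{\overset{\twoheadrightarrow}{k}} \to \overset{\twoheadrightarrow}{\mathcal{D}ef}_{\varphi}(T_\e) \cong \overset{\twoheadrightarrow}{M}_\f$ sending $\partial/\partial s_j$ to the class of $\mathbf{a}^j \partial/\partial x + \mathbf{b}^j \partial/\partial y$ is bijective, which is first-order formal semiuniversality. Inductive lifting of deformations along small extensions, exactly as in the proof of Theorem~\ref{T:DEFGREUEL} in \cite{Greuel-book}, then gives formal semiuniversality. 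The only substantive change from the classical case is that one works with the strengthened order conditions $\mathbf{b}^j \in \bigoplus_i t_i^{2m_i}\C\{t_i\}$ and quotient by $(x^2,y)\partial/\partial y$; these conditions are stable under the compositions used in the lifting step, so Greuel's argument closes unchanged. Flenner's Theorem~\ref{T:FLENNER} then upgrades formal semiuniversality to analytic semiuniversality.

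\emph{Step 2: Versality of $\mathcal{C}on\, \overset{\twoheadrightarrow}{\Phi}$ in $\widehat{\mathcal{D}ef}^{\,em}_{\psi}$.} I reduce to Step~1 using $\mathcal{C}on$ and $\pi$. Given versality data $f: T''\hookrightarrow T'$, $g: T'' \to \C^{\overset{\twoheadrightarrow}{k}}$, $\Psi' \in \widehat{\mathcal{D}ef}^{\,em}_{\psi}(T')$ with $f^*\Psi' \cong g^*(\mathcal{C}on\, \overset{\twoheadrightarrow}{\Phi})$, apply the plane projection $\pi$. By Lemma~\ref{L:LEMAEM} we obtain $(\Psi')^\pi \in \overset{\twoheadrightarrow}{\mathcal{D}ef}_{\varphi}(T')$, and using $(\mathcal{C}on\, \Phi)^\pi = \Phi$ together with functoriality of $\pi$, the hypothesis isomorphism becomes $f^*((\Psi')^\pi) \cong g^*\overset{\twoheadrightarrow}{\Phi}$ in $\overset{\twoheadrightarrow}{\mathcal{D}ef}_{\varphi}$. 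Step~1 supplies $h: T' \to \C^{\overset{\twoheadrightarrow}{k}}$ with $h\circ f = g$ and $h^*\overset{\twoheadrightarrow}{\Phi} \cong (\Psi')^\pi$. Applying $\mathcal{C}on$ and using $\mathcal{C}on(\Psi^\pi) = \Psi$ together with compatibility of $\mathcal{C}on$ with pullback gives
\[
h^*(\mathcal{C}on\, \overset{\twoheadrightarrow}{\Phi}) = \mathcal{C}on(h^*\overset{\twoheadrightarrow}{\Phi}) \cong \mathcal{C}on((\Psi')^\pi) = \Psi',
\]
which is the desired versality. Note that \emph{semi}universality fails here: the surjection $\overset{\twoheadrightarrow}{M}_\f \twoheadrightarrow \widehat{M}_\f$ in~(\ref{E:MANM}) is typically not injective, so the Kodaira--Spencer map of $\mathcal{C}on\, \overset{\twoheadrightarrow}{\Phi}$ is only surjective, not bijective, and this is exactly why the lemma claims only versality.

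The main obstacle is the bookkeeping in Step~2: one must verify that $\mathcal{C}on$ and $\pi$ transport morphisms compatibly, i.e.\ that an isomorphism of Legendrian deformations given by a relative contact transformation descends to an isomorphism of plane deformations of the same type for which Step~1 supplies the lift $h$, and conversely that the lift in $\overset{\twoheadrightarrow}{\mathcal{D}ef}_{\varphi}$ produced by Step~1 comes from a relative contact transformation after passing to conormals. This is exactly the content of the identity $\chi \cdot \Phi = \pi \circ \chi \circ \mathcal{C}on\, \Phi$ from the opening of Section~\ref{S2.3}, which, combined with $\mathcal{C}on(\Psi^\pi)=\Psi$ and $(\mathcal{C}on\, \Phi)^\pi=\Phi$, makes $\mathcal{C}on$ and $\pi$ into an equivalence of fibered groupoids; versality then transports across this equivalence.
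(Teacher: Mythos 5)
Your proposal contains genuine gaps, and the more serious one is in Step~2. You transport the versality data through $\pi$: from the hypothesis $f^\ast \Psi' \cong g^\ast(\mathcal{C}on\,\overset{\twoheadrightarrow}{\Phi})$ in $\widehat{\mathcal{D}ef}^{\,em}_{\psi}(T'')$ you conclude $f^\ast((\Psi')^\pi) \cong g^\ast\overset{\twoheadrightarrow}{\Phi}$ in $\overset{\twoheadrightarrow}{\mathcal{D}ef}_{\varphi}(T'')$. This step requires that an isomorphism of Legendrian deformations --- a relative contact transformation $\chi(x,y,p,\mathbf{z})=(x+\alpha,y+\beta,p+\gamma,\mathbf{z})$ --- descends to an automorphism of $\C^2\times T''$. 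It does not: $\alpha$ and $\beta$ depend on $p$ in general (this is exactly the extra freedom recorded by the submodule $\widehat{I}$), so contact-isomorphic Legendrian deformations can have plane projections that are \emph{not} isomorphic in $\overset{\twoheadrightarrow}{\mathcal{D}ef}_{\varphi}$. Consequently $\mathcal{C}on$ and $\pi$ are \emph{not} an equivalence of fibered groupoids: they are inverse bijections on objects, but the Legendrian category has strictly more isomorphisms. Your own closing observation that $\overset{\twoheadrightarrow}{M}_\f \twoheadrightarrow \widehat{M}_\f$ is typically not injective contradicts the claimed equivalence, since an equivalence of fibered groupoids would identify the sets of isomorphism classes over $T_\e$, i.e.\ force $\overset{\twoheadrightarrow}{M}_\f \cong \widehat{M}_\f$. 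The only direction that is valid --- and the only one the paper uses --- is the opposite one: a plane isomorphism lifts canonically, as in (\ref{E:CANONICAL}), to a relative contact transformation, so $\mathcal{C}on$ sends isomorphisms to isomorphisms. Accordingly, the paper proves only \emph{completeness}: given $\Psi \in \widehat{\mathcal{D}ef}^{\,em}_{\psi}(T)$, project to $\Psi^\pi$, induce it from $\overset{\twoheadrightarrow}{\Phi}$ by some $f$, and push the resulting plane isomorphism back up with $\mathcal{C}on$; the genuinely contact-theoretic versality statement, with nontrivial $T''\hookrightarrow T'$ and a contact isomorphism as hypothesis, is only established afterwards, in Theorem~\ref{T:VERSALEQUI}, by the formal lifting argument with relative contact transformations.

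Step~1 is also not self-contained. First, Theorem~\ref{T:FLENNER} has as hypothesis the \emph{existence} of a versal deformation, which is precisely what you are trying to produce, so invoking it to upgrade formal semiuniversality to analytic semiuniversality is circular at this stage. Second, the assertion that Greuel's small-extension lifting ``closes unchanged'' under the constraints $\mathbf{b}^j\in\bigoplus_i t_i^{2m_i}\C\{t_i\}$ and generic position of the conormal is exactly what would have to be proved, not assumed. The paper avoids both problems with a restriction argument: since $\imath:\overset{\twoheadrightarrow}{M}_\f\hookrightarrow M_\f$ is injective, $\overset{\twoheadrightarrow}{\Phi}$ is the restriction of the semiuniversal equimultiple deformation $\Phi$ of Theorem~\ref{T:DEFGREUEL} to the subspace $\overset{\twoheadrightarrow}{M}_\f\subset M_\f$; any $\Phi_0\in\overset{\twoheadrightarrow}{\mathcal{D}ef}_{\varphi}(T)$ is induced from $\Phi$ by some $f:T\to M_\f$ by Greuel's theorem, the defining condition of the subcategory forces $f(T)\subset\overset{\twoheadrightarrow}{M}_\f$, and hence $\Phi_0\cong f^\ast\overset{\twoheadrightarrow}{\Phi}$. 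No new lifting argument, and no appeal to Flenner, is needed there.
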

\begin{proof}
We will only show the completeness of $ \overset{\twoheadrightarrow}{\Phi}$ and $\mathcal{C}on \, \overset{\twoheadrightarrow}{\Phi}$. Since the linear inclusion map $\imath$ referred in (\ref{E:MANM}) is injective, the deformation $\overset{\twoheadrightarrow}{\Phi}$ is the restriction to  $\overset{\twoheadrightarrow}{M}_\f$ of the deformation $\Phi$ introduced in Theorem~\ref{T:DEFGREUEL}. Let $\Phi_0 \in \overset{\twoheadrightarrow}{\mathcal{D}ef_{\varphi}} (T)$. Since $\Phi_0 \in \mathcal{D}ef^{\, em}_{\, \varphi}(T)$, there is a morphism of analytic spaces $f:T \to M_\f$ such that $\Phi_0 \cong f^\ast \Phi$. Since $\Phi_0 \in \overset{\twoheadrightarrow}{\mathcal{D}ef_{\varphi}} (T)$, $f(T) \subset \overset{\twoheadrightarrow}{M}_\f$. Hence $f^\ast \overset{\twoheadrightarrow}{\Phi} = f^\ast \Phi$.

If $\Psi \in \widehat{\mathcal{D}ef}^{\, em}_{\, \psi}(T)$, $\Psi^\pi \in\overset{\twoheadrightarrow}{\mathcal{D}ef_{\varphi}} (T)$. Hence there is $f:T \to \overset{\twoheadrightarrow}{M}_\f$ such that $\Psi^\pi \cong f^\ast \overset{\twoheadrightarrow}{\Phi}$. Therefore $\Psi=\mathcal{C}on \, \Psi^\pi \cong \mathcal{C}on \, f^\ast \overset{\twoheadrightarrow}{\Phi}=f^\ast \mathcal{C}on \, \overset{\twoheadrightarrow}{\Phi}$.
\end{proof}

\begin{theorem}\label{T:VERSALEQUI}
Let $ \mathbf{a}^j \in \bigoplus_{i=1}^r t^{m_i}_i\C\{t_i\}, \, \mathbf{b}^j \in \bigoplus_{i=1}^r t^{2m_i}_i\C\{t_i\}$, $1\leq j \leq \ell$. Assume that (\ref{E:BASIS}) represents a basis \em[\em a system of generators \em]\em of $\widehat{M}_\f$. Let $\Phi$ be the deformation given by (\ref{E:DEFXYG}), $1 \leq i \leq r$. Then $\mathcal{C}on \, \Phi$ is a semiuniversal \em[\em versal \em]\em deformation of $\psi$ in $\widehat{\mathcal{D}ef}^{\, em}_{\, \psi}$.

\end{theorem}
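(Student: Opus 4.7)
The plan is to apply Flenner's theorem (Theorem~\ref{T:FLENNER}). Lemma~\ref{L:LEMAVERSAL} already furnishes a versal deformation of $\psi$ in $\widehat{\mathcal{D}ef}^{\,em}_{\,\psi}$, so once I verify formal versality (respectively, formal semiuniversality) of $\mathcal{C}on\,\Phi$, the theorem upgrades this to the geometric statement. Formal versality will in turn reduce to infinitesimal lifting, where the hypotheses on the $\mathbf{a}^j,\mathbf{b}^j$ are tailored to the tangent-space computation of Theorem~\ref{T:DEFINFEQUI}.

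Concretely, the Kodaira-Spencer map of $\mathcal{C}on\,\Phi$,
\[
\mathrm{KS}\colon T_{T,0}=\C^\ell \longrightarrow \widehat{\mathcal{D}ef}^{\,em}_{\,\psi}(T_\e) \cong \widehat{M}_\f,
\]
sends $\partial/\partial s_j|_0$ to the class of $\mathbf{a}^j\,\partial/\partial x+\mathbf{b}^j\,\partial/\partial y$, so it is surjective in the ``generators'' case and bijective in the ``basis'' case. To establish formal versality, let $f\colon T''\hookrightarrow T'$, $g\colon T''\to T$ and $\Psi'\in\widehat{\mathcal{D}ef}^{\,em}_{\,\psi}(T')$ be as in Definition~\ref{D:FORMALVERSALDEF}. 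I would build $h_n\colon T'_n\to T$ by induction on $n$. For the inductive step, decompose the surjection $\mathcal{O}_{T'_{n+1}}\twoheadrightarrow \mathcal{O}_{T'_n}$ into a finite chain of small extensions; this reduces to lifting across a single small extension with kernel $\e\C$. Smoothness of $T=\C^\ell$ produces a set-theoretic lift $\tilde h_{n+1}$ compatible with $g_{n+1}$. The two deformations $\tilde h_{n+1}^{\,\ast}\mathcal{C}on\,\Phi$ and $(\alpha'_{n+1})^{\ast}\Psi'$ agree on $T'_n$; a relative version of Theorem~\ref{T:DEFINFEQUI}, obtained by running the calculation in its proof with the relative contact transformations of Theorem~\ref{T:TE}, identifies their discrepancy with a class in $\widehat{M}_\f\otimes \e\C$. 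Surjectivity of $\mathrm{KS}$ supplies a tangent vector at $h_n(0)$ whose image in $\widehat{M}_\f$ is this discrepancy; translating $\tilde h_{n+1}$ by this tangent vector produces the desired $h_{n+1}$.

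In the basis case, the injectivity of $\mathrm{KS}$ forces uniqueness of the correction on tangent vectors, hence of $T(h_n)$, which is precisely formal semiuniversality. Applying Theorem~\ref{T:FLENNER} then yields versality (resp.\ semiuniversality) of $\mathcal{C}on\,\Phi$. The main technical obstacle is the small-extension comparison invoked above: Theorem~\ref{T:DEFINFEQUI} is stated only for the base $T_\e$, so its argument must be run relatively with $\C$ replaced by $\mathcal{O}_{T'_n}$ (an Artinian, possibly singular, ring). This is exactly the scenario handled by the singular Cauchy-Kowalevski theorem~\ref{T:CKT} and the small-extension theorem~\ref{T:TE}, so I expect the computation to go through once the bookkeeping with relative contact transformations is written out carefully, but this is where the real work lies.
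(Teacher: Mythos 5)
Your skeleton coincides with the paper's: reduce via Theorem~\ref{T:FLENNER} and Lemma~\ref{L:LEMAVERSAL} to formal (semi)versality, verify the latter by lifting across small extensions, and solve the infinitesimal problem using the fact that the classes of $\mathbf{a}^j\,\partial/\partial x+\mathbf{b}^j\,\partial/\partial y$ generate (resp.\ form a basis of) $\widehat{M}_\f$. However, the step you defer --- that ``a relative version of Theorem~\ref{T:DEFINFEQUI} identifies the discrepancy with a class in $\widehat{M}_\f\otimes\e\C$'' --- is not bookkeeping; it is essentially the entire content of the paper's proof (equations (\ref{E3})--(\ref{E7})). Carrying it out requires lifting the trivializing isomorphism $(\chi',\xi')$ and the classifying map $\eta'$ across the extension, using Theorem~\ref{T:TE} to parametrize the extension $(H_1'+\e\alpha,H_2'+\e\beta,H_3'+\e\gamma)$ of the relative contact transformation by the free data $(\alpha,\beta_0)$, and, crucially, invoking Remark~\ref{R:BETAZERO}~(i) to see that the correction terms $\left(\alpha(x_i,y_i,p_i),\beta(x_i,y_i,p_i)\right)$ lie in $\widehat{I}$; this last point is exactly why the discrepancy is well defined in the quotient $\widehat{M}_\f$ rather than merely in the ambient module, and your outline never touches it.

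There is also a concrete step in your plan that fails as stated. You propose to run the computation directly over the Artinian rings $\mathcal{O}_{T'_n}$, calling this ``exactly the scenario handled by'' Theorems~\ref{T:CKT} and~\ref{T:TE}. But Theorem~\ref{T:TE} is stated and proved only for small extensions of the special split form $\mathcal{O}_S\cong\C\{\mathbf{z}\}$, $\mathcal{O}_T\cong\C\{\mathbf{z}\}\oplus\C\e$ with $\e\,\mathfrak{m}_{\mathcal{O}_T}=0$: writing the extended transformation as $H_i+\e(\cdot)$ with $H_i$ defined over $\mathcal{O}_S$ presupposes a ring splitting $\mathcal{O}_S\hookrightarrow\mathcal{O}_T$, which a general small extension of Artinian rings (e.g.\ $\C[t]/(t^3)\twoheadrightarrow\C[t]/(t^2)$) does not admit. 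The paper's proof needs an extra device that your proposal omits: dominate the given small extension $A\twoheadrightarrow A'$ by the standard one $\widetilde{A}=\C\{\mathbf{z}\}\oplus\e\C\twoheadrightarrow\widetilde{A}'=\C\{\mathbf{z}\}$ (possible because $\mathfrak{m}_A\delta=0$ for the kernel generator $\delta$), lift the deformation, the contact transformation (via a linear section $\sigma$ and Theorem~\ref{T:RCKT}~(d), giving $\chi_{\sigma(\alpha),\sigma(\beta_0)}$), the reparametrization and the classifying map to this smooth cover, solve the lifting problem there with Theorem~\ref{T:TE}, and descend using Theorem~\ref{T:RCKT}~(c). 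Without this reduction your appeal to Theorem~\ref{T:TE} is unjustified; with it, your outline becomes, in substance, the paper's proof.
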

\begin{proof}
By Theorem~\ref{T:FLENNER} and Lemma~\ref{L:LEMAVERSAL} it is enough to show that $\mathcal{C}on \, \Phi$ is formally semiuniversal [versal].

Let $\imath: T' \hookrightarrow T$ be a small extension. Let $\Psi \in \widehat{\mathcal{D}ef}^{\, em}_{\, \psi}(T)$. Set $\Psi'= \imath^\ast \Psi$. Let $\eta': T' \to \C^\ell$ be a morphism of complex analytic spaces. Assume that $(\chi',\xi')$ define an isomorphism
\[
\eta'^\ast \mathcal{C}on \, \Phi \cong \Psi'.
\]
We need to find $\eta: T \to \C^\ell$ and $\chi,\xi$ such that $\eta' =\eta \circ \imath$ and $\chi,\xi$ define an isomorphism 
\[
\eta^\ast \mathcal{C}on \, \Phi \cong \Psi
\]
that extends $(\chi',\xi')$.
Let $A\,[A']$ be the local ring of $T \, [T']$. Let $\delta$ be the generator of $Ker(A \twoheadrightarrow A')$. We can assume $A' \cong \C\{\mathbf{z}\}/I$, where $\mathbf{z}=(z_1,\ldots,z_m)$. Set
\[
\widetilde{A}'=\C\{\mathbf{z}\} \quad \text{and} \quad \widetilde{A}=\C\{\mathbf{z},\e\}/(\e^2,\e z_1,\ldots,\e z_m).
\]
Let $\mathfrak{m}_A$ be the maximal ideal of $A$. Since $\mathfrak{m}_A \delta=0$ and $\delta \in \mathfrak{m}_A$, there is a morphism of local analytic algebras from $\widetilde{A}$ onto $A$ that takes $\e$ into $\delta$ such that the diagram
\begin{equation}
\xymatrix{
\widetilde{A}   \ar[d] \ar[r] &\widetilde{A}'  \ar[d]\\
A   \ar[r] &A' 
}
\end{equation}
commutes. Assume $\widetilde{T} \,[\widetilde{T}']$ has local ring $\widetilde{A} \,[\widetilde{A}']$. We also denote by $\imath$ the morphism $\widetilde{T}' \hookrightarrow \widetilde{T}$. We denote by $\kappa$ the morphisms $T \hookrightarrow \widetilde{T} $ and $T' \hookrightarrow \widetilde{T}'$.  Let $\widetilde{\Psi} \in \widehat{\mathcal{D}ef}^{\, em}_{\, \psi}(\widetilde{T})$ be a lifting of $\Psi$. 

We fix a linear map $\sigma: A' \hookrightarrow \widetilde{A}'$ such that $\kappa^\ast \sigma=id_{A'}$. Set $\widetilde{\chi}'=\chi_{\sigma(\alpha), \sigma(\beta_0)}$, where  $\chi'=\chi_{\alpha,\beta_0}$. Define $\widetilde{\eta}'$ by  $\widetilde{\eta}'^\ast s_i=\sigma(\eta'^\ast s_i)$, $i=1,\ldots, l$. Let $\widetilde{\xi}'$ be the lifting of $\xi'$ determined by $\sigma$. Then
\[
\widetilde{\Psi}':= \widetilde{\chi}'^{-1} \circ \widetilde{\eta}'^\ast \mathcal{C}on \, \Phi \circ \widetilde{\xi}'^{-1}
\]
is a lifting of $\Psi'$ and
\begin{equation}\label{E:PSITIL'}
\widetilde{\chi}' \circ \widetilde{\Psi}' \circ \widetilde{\xi}'= \widetilde{\eta}'^\ast \mathcal{C}on \, \Phi.
\end{equation}
By Theorem~\ref{T:RCKT} it is enough to find liftings $\widetilde{\chi},\widetilde{\xi},\widetilde{\eta}$ of $\widetilde{\chi}',\widetilde{\xi}',\widetilde{\eta}'$ such that
\[
\widetilde{\chi} \cdot \widetilde{\Psi}^\pi \circ \widetilde{\xi} = \widetilde{\eta}^\ast \Phi
\]
in order to prove the theorem.\\

Consider the following commutative diagram
\[
\xymatrix{
\bar{\C} \times \widetilde{T}' \; \ar[d]^{\widetilde{\Psi}'}  \ar@{^{(}->}[r] &\bar{\C}\times \widetilde{T} \; \ar[d]^{\widetilde{\Psi}} \ar@{.>}[r] &\bar{\C}\times \C^\ell \ar[d]^{\mathcal{C}on \, \Phi}\\
\C^3 \times \widetilde{T}' \; \ar[d]^{pr} \ar@{^{(}->}[r] &\C^3 \times \widetilde{T} \; \ar[d]^{pr} \ar@{.>}[r] &\C^3 \times \C^\ell \ar[d]\\
\widetilde{T}' \; \ar@{^{(}->}[r] \ar@/_2pc/[rr]^{\widetilde{\eta}'} &\widetilde{T} \; \ar@{.>}[r]^{\widetilde{\eta}} &\C^\ell.
}
\]
If $\mathcal{C}on \, \Phi$ is given by 
\[
X_i(t_i,{\bf s}),\; Y_i(t_i,{\bf s}),\; P_i(t_i,{\bf s}) \; \in \C\{\mathbf{s},t_i\},
\]
then $\widetilde{\eta}'^{\ast} \, \mathcal{C}on \, \Phi$ is given by 
\[
X_i(t_i,\widetilde{\eta}'({\bf z})),\; Y_i(t_i,\widetilde{\eta}'({\bf z})),\; P_i(t_i,\widetilde{\eta}'({\bf z}))\; \in \widetilde{A}'\{t_i\}=\C\{\mathbf{z},t_i\}
\]
for $i=1,\ldots,r$.
Suppose that $\widetilde{\Psi}'$ is given by 
\[
U'_i(t_i,{\bf z}),\; V'_i(t_i,{\bf z}),\; W'_i(t_i,{\bf z})\; \in \C\{\mathbf{z},t_i\}.
\]
Then, $\widetilde{\Psi}$ must be given by
\[
U_i=U'_i + \e u_i, \; V_i=V'_i + \e v_i, \; W_i=W'_i + \e w_i \; \in \widetilde{A}\{t_i\}= \C\{\mathbf{z},t_i\} \oplus \e\C\{t_i\}
\]
with $u_i, v_i, w_i \in \C\{t_i\}$ and $i=1,\ldots,r$. By definition of deformation we have that, for each $i$,
\[
(U_i,V_i,W_i)=(x_i(t_i),y_i(t_i),p_i(t_i)) \; mod \; \mathfrak{m}_{\widetilde{A}}.
\]
Suppose  $\widetilde{\eta}':\widetilde{T}' \to \C^\ell$ is given by $(\widetilde{\eta}'_1,\ldots,\widetilde{\eta}'_\ell)$, with $\widetilde{\eta}'_i \in \C\{{\bf z}\}$. Then $\widetilde{\eta}$ must be given by $\widetilde{\eta}=\widetilde{\eta}' + \e\widetilde{\eta}^0$ for some $\widetilde{\eta}^0 = (\widetilde{\eta}^0_1,\ldots,\widetilde{\eta}^0_\ell) \in \C^\ell$. Suppose
that $\tilde{\chi}': \C^3\times \widetilde{T}' \to \C^3\times \widetilde{T'}$ is given at the ring level by
\[
(x,y,p) \mapsto (H'_1,H'_2,H'_3),
\]
such that $H'=id \; mod \; \mathfrak{m}_{\widetilde{A}'}$ with $H'_i \in (x,y,p)A'\{x,y,p\}$. Let
the automorphism $\widetilde{\xi}': \bar{\C}\times \widetilde{T}' \to \bar{\C}\times \widetilde{T}'$ be given at the ring level by
\[
t_i \mapsto h'_i
\]such that $h'=id \; mod \; \mathfrak{m}_{\widetilde{A}'}$ with $h'_i \in (t_i)\C\{\mathbf{z},t_i\}$.

Then, from $\ref{E:PSITIL'}$ follows that
\begin{align}\label{E3}
\nonumber X_i(t_i,\widetilde{\eta}') &= H'_1(U'_i(h'_i),V'_i(h'_i),W'_i(h'_i)),\\
Y_i(t_i,\widetilde{\eta}') &= H'_2(U'_i(h'_i),V'_i(h'_i),W'_i(h'_i)),\\
\nonumber P_i(t_i,\widetilde{\eta}') &= H'_3(U'_i(h'_i),V'_i(h'_i),W'_i(h'_i)).
\end{align}
Now, $\widetilde{\eta}'$ must be extended to $\widetilde{\eta}$ such that the first two previous equations extend as well. That is, we must have
\begin{align}\label{E4}
X_i(t_i,\widetilde{\eta}) &= (H'_1 + \e\alpha)(U_i(h'_i+\e h^0_i), V_i(h'_i+\e h^0_i), W_i(h'_i+\e h^0_i),\\
Y_i(t_i,\widetilde{\eta}) &= (H'_2 + \e\beta)(U_i(h'_i+\e h^0_i), V_i(h'_i+\e h^0_i), W_i(h'_i+\e h^0_i). \notag
 \end{align}
 with $\alpha,\beta \in (x,y,p)\C\{x,y,p\}$, $h^0_i \in (t_i)\C\{t_i\}$ such that 
 \[
(x,y,p) \mapsto (H'_1 + \e \alpha,H'_2 + \e \beta,H'_3 + \e \gamma)
\]
gives a relative contact transformation over $ \widetilde{T}$ for some $\gamma \in (x,y,p)\C\{x,y,p\}$. The existence of this extended relative contact tranformation is guaranteed by Theorem \ref{T:TE}. Moreover, again by Theorem \ref{T:TE} this extension depends only on the choices of $\alpha$ and $\beta_0$.
 So, we need only to find $\alpha$, $\beta_0$, $\widetilde{\eta}^0$ and $h^0_i$ such that (\ref{E4}) holds. Using Taylor's formula and $\e^2=0$ we see that
\begin{align}\label{E5}
\nonumber X_i(t_i, \widetilde{\eta}' + \e \widetilde{\eta}^0) &= X_i(t_i,\widetilde{\eta}') + \e \sum_{j=1}^{\ell}  \frac{\partial X_i}{\partial s_j} (t_i, \widetilde{\eta}')\widetilde{\eta}^0_j\\
(\e  \mathfrak{m}_{\widetilde{A}}=0) \qquad &= X_i(t_i,\widetilde{\eta}') + \e \sum_{j=1}^{\ell}  \frac{\partial X_i}{\partial s_j} (t_i, 0)\widetilde{\eta}^0_j,\\
\nonumber Y_i(t_i, \widetilde{\eta}' + \e \widetilde{\eta}^0) &= Y_i(t_i,\widetilde{\eta}') + \e \sum_{j=1}^{\ell}  \frac{\partial Y_i}{\partial s_j} (t_i, 0)\widetilde{\eta}^0_j.
\end{align}
Again by Taylor's formula and noticing that $\e \mathfrak{m}_{\widetilde{A}}=0$, $\e \mathfrak{m}_{\widetilde{A}'}=0$ in $\widetilde{A}$, $h'=id \; mod \; \mathfrak{m}_{\widetilde{A}'}$ and $(U_i,V_i)=(x_i(t_i),y_i(t_i)) \; mod \; \mathfrak{m}_{\widetilde{A}}$ we see that
\begin{align}\label{E6}
\nonumber U_i(h'_i + \e h^0_i) &= U_i(h'_i) + \e \dot{U_i}(h'_i)h^0_i\\
&= U'_i(h'_i) + \e (\dot{x_i}h_i^0 + u_i),\\
\nonumber V_i(h'_i + \e h^0_i) &= V'_i(h'_i) + \e (\dot{y_i}h_i^0 + v_i).
\end{align}
Now, $H' = id \; mod \; \mathfrak{m}_{\widetilde{A}'}$, so
\[
\frac{\partial H'_1}{\partial x} = 1 \; mod \; \mathfrak{m}_{\widetilde{A}'}, \qquad \frac{\partial H'_1}{\partial y}, \frac{\partial H'_1}{\partial p} \in  \mathfrak{m}_{\widetilde{A}'} \widetilde{A}'\{x,y,p\}.
\]
In particular,
\[
\e \frac{\partial H'_1}{\partial y}= \e \frac{\partial H'_1}{\partial p}=0.
\]
By this and arguing as in (\ref{E5}) and (\ref{E6}) we see that
\begin{align*}
 \nonumber &(H'_1 + \e\alpha)( U'_i(h'_i) + \e (\dot{x_i}h_i^0 + u_i), V'_i(h'_i) + \e (\dot{y_i}h_i^0 + v_i), W'_i(h'_i) + \e (\dot{p_i}h_i^0 + w_i))\\
\nonumber &= H'_1(U'_i(h'_i),V'_i(h'_i),W'_i(h'_i)) + \e(\alpha(U'_i(h'_i),V'_i(h'_i),W'_i(h'_i)) + 1(\dot{x_i}h_i^0 + u_i))\\
&= H'_1(U'_i(h'_i),V'_i(h'_i),W'_i(h'_i)) + \e( \alpha (x_i,y_i,p_i) + \dot{x_i}h_i^0 + u_i),\\
&(H'_2 + \e\beta)( U'_i(h'_i) + \e (\dot{x_i}h_i^0 + u_i), V'_i(h'_i) + \e (\dot{y_i}h_i^0 + v_i), W'_i(h'_i) + \e (\dot{p_i}h_i^0 + w_i))\\
&= H'_2(U'_i(h'_i),V'_i(h'_i),W'_i(h'_i)) + \e( \beta (x_i,y_i,p_i) + \dot{y_i}h_i^0 + v_i)
\end{align*}
Substituting this in (\ref{E4}) and using (\ref{E3}) and (\ref{E5}) we see that we have to find $\widetilde{\eta}^0 = (\widetilde{\eta}^0_1,\ldots,\widetilde{\eta}^0_\ell) \in \C^\ell$, $h^0_i$ such that
\begin{align}\label{E7}
&(u_i(t_i),v_i(t_i)) = \sum_{j=1}^{\ell} \widetilde{\eta}^0_j\left(\frac{\partial X_i}{\partial s_j} (t_i, 0),\frac{\partial Y_i}{\partial s_j} (t_i, 0)\right)-\\
 \nonumber - h^0_i(t_i)&((\dot {x_i}(t_i), \dot{y_i}(t_i)) - (\alpha(x_i(t_i),y_i(t_i),p_i(t_i)),\beta(x_i(t_i),y_i(t_i),p_i(t_i))).
\end{align}
Note that, because of Remark \ref{R:BETAZERO} (i),  $(\alpha(x_i(t_i),y_i(t_i),p_i(t_i)),\beta(x_i(t_i),y_i(t_i),p_i(t_i))) \in \widehat{I}$ for each $i$.  Also note that $\widetilde{\Psi} \in \widehat{\mathcal{D}ef}^{\, em}_{\, \psi}(\widetilde{T})$ means that $u_i \in t_i^{m_i}\C\{t_i\}, v_i \in t_i^{2m_i}\C\{t_i\}$.
Then, if the vectors
\begin{align*}
&\left(\frac{\partial X_1}{\partial s_j} (t_1, 0),\ldots,\frac{\partial X_r}{\partial s_j} (t_r, 0)\right)\frac{\partial}{\partial x} + \left(\frac{\partial Y_1}{\partial s_j} (t_1, 0),\ldots,\frac{\partial Y_r}{\partial s_j}(t_r, 0)\right)\frac{\partial}{\partial y}\\
&=(a_1^j(t_1),\ldots,a_r^j(t_r))\frac{\partial}{\partial x} + (b_1^j(t_1),\ldots,b_r^j(t_r))\frac{\partial}{\partial y}, \qquad j=1,\ldots,\ell
\end{align*}
 form a basis of [generate]  $\widehat{M}_\f$, we can solve (\ref{E7}) with unique $\widetilde{\eta}^0_1, \ldots, \widetilde{\eta}^0_\ell$ [respectively, solve]  for all $i=1,\ldots,r$. This implies that the conormal of $\Phi$ is a formally semiuniversal [respectively, versal]  equimultiple deformation of $\p$ over $\mathbb{C}^{\ell}$.
\end{proof}


\section{Versal Deformations}\label{S7}

Let $f \in \C\{x_1, \ldots,x_n\}$. We will denote by $\int fdx_i$ the solution of the Cauchy problem
\[
\frac{\partial g}{\partial x_i}=f, \qquad g \in (x_i).
\]
Let $\psi$ be a Legendrian curve with parametrization given by
\begin{equation}\label{E:LEGF}
t_i \mapsto (x_i(t_i), y_i(t_i),p_i(t_i)) \qquad i=1,\ldots,r.
\end{equation}
We will call \emph{fake plane projection} of (\ref{E:LEGF}) to the plane curve $\sigma$ with parametrization given by
\begin{equation}\label{E:PLAF}
t_i \mapsto (x_i(t_i),p_i(t_i)) \qquad i=1,\ldots,r.
\end{equation}
We will denote $\sigma$ by $\psi^{\pi_f}$.

Given a plane curve $\sigma$ with parametrization (\ref{E:PLAF}), we will cal \emph{fake conormal} of $\sigma$ to the Legendrian curve $\psi$ with parametrization (\ref{E:LEGF}), where
\[
y_i(t_i)=\int p_i(t_i)\dot{x}_i(t_i)dt_i.
\]

We will denote $\psi$ by $\mathcal{C}on_f \, \sigma$. Applying the construction above to each fibre of a deformation we obtain  functors
\[
\pi_f: \widehat{\mathcal{D}ef}_\psi \to \mathcal{D}ef_\sigma, \qquad \mathcal{C}on_f: \mathcal{D}ef_\sigma \to \widehat{\mathcal{D}ef}_\psi.
\]
Notice that
\begin{equation}\label{E:FFAKE}
\mathcal{C}on_f\,(\Psi^{\pi_f})=\Psi, \qquad (\mathcal{C}on_f \,(\Sigma))^{\pi_f}=\Sigma
\end{equation}
for each $\Psi \in \widehat{\mathcal{D}ef}_\psi$ and each $\Sigma \in \mathcal{D}ef_\sigma$.

Let $\psi$ be the parametrization of a Legendrian curve given by (\ref{E:LEGF}). Let $\sigma$ be the fake plane projection of $\psi$. Set $\dot{\sigma} := \dot{\mathbf{x}}\frac{\partial}{\partial x} +  \dot{\mathbf{p}}\frac{\partial}{\partial p}$.
Let $I^f$ be the linear subspace of
\[
\mathfrak{m}_{\bar{\C}} \frac{\partial}{\partial x} \oplus \mathfrak{m}_{\bar{\C}} \frac{\partial}{\partial p}= \left(\bigoplus_{i=1}^r t_i\C\{t_i\}\frac{\partial}{\partial x}\right) \oplus \left(\bigoplus_{i=1}^r t_i\C\{t_i\}\frac{\partial}{\partial p}\right)
\]
generated by
\[
\alpha_0  \frac{\partial}{\partial x} - \left(  \frac{\partial \alpha_0}{\partial x} +  \frac{\partial \alpha_0}{\partial y}\mathbf{p}\right)\mathbf{p}  \frac{\partial}{\partial p}, \qquad
\left(  \frac{\partial \beta_0}{\partial x} +  \frac{\partial \beta_0}{\partial y}\mathbf{p} \right) \frac{\partial}{\partial p},
\]
and
\[
\alpha_k \mathbf{p}^k \frac{\partial}{\partial x} - \frac{1}{k+1}\left(  \frac{\partial \alpha_k}{\partial x}\mathbf{p}^{k+1} +  \frac{\partial \alpha_k}{\partial y}\mathbf{p}^{k+2} \right) \frac{\partial}{\partial p}, \quad k \geq 1,
\]
where $\; \alpha_k \in (x,y), \beta_0 \in (x^2,y)$ for each $k \geq 0$. Set
\begin{equation*}
M^f_\sigma = \frac{\mathfrak{m}_{\bar{\C}} \frac{\partial}{\partial x} \oplus \mathfrak{m}_{\bar{\C}} \frac{\partial}{\partial p}}{\mathfrak{m}_{\bar{\C}} \dot{\sigma} + I^f}.
\end{equation*}

\begin{theorem}\label{T:FAKETEP}
Assuming the notations above, $\widehat{\underline{\mathcal{D}ef}}_\psi \, (T_\e) \cong M^f_\sigma$.
\end{theorem}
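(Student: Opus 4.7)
The plan is to parallel the strategy of Theorem~\ref{T:DEFINFEQUI}, this time using the fake conormal/fake plane projection bijection (\ref{E:FFAKE}) in place of the conormal/plane projection functors. Since $\mathcal{C}on_f(\Psi^{\pi_f}) = \Psi$, every $\Psi \in \widehat{\mathcal{D}ef}_\psi(T_\e)$ is determined by its fake plane projection, and the task reduces to identifying which fake projections are equivalent under the action of a relative contact transformation together with a reparametrization of $\bar\C$.

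Writing $\Psi_i(t_i,\e) = (x_i + \e a_i,\, y_i + \e b_i,\, p_i + \e c_i)$ with $a_i, b_i, c_i \in t_i \C\{t_i\}$, the Legendrian condition $\dot Y_i = P_i\dot X_i$ modulo $\e^2$ forces $\dot b_i = p_i\dot a_i + c_i\dot x_i$, so $b_i$ is determined by $(a_i,c_i)$. This identifies $\widehat{\mathcal{D}ef}_\psi(T_\e)$ set-theoretically with $\mathfrak{m}_{\bar{\C}}\partial/\partial x \oplus \mathfrak{m}_{\bar{\C}}\partial/\partial p$ via $\Psi \mapsto \mathbf{a}\partial/\partial x + \mathbf{c}\partial/\partial p$. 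An isomorphism of $\Psi$ is implemented by a relative contact transformation $\chi$ and a reparametrization $\xi_i(t_i) = t_i + \e h_i$, and by Theorem~\ref{T:TE} every such $\chi$ has the form $\chi_{\alpha,\beta_0}(x,y,p,\e) = (x+\e\alpha,\, y+\e\beta,\, p+\e\gamma,\, \e)$ with $\alpha \in \mathfrak{m}_X$, $\beta_0 \in (x^2,y)$, and $\gamma$ determined by $(\alpha,\beta_0)$ through (\ref{E:CAUCHYTEGAMMA}). A Taylor expansion as in the proof of Theorem~\ref{T:DEFINFEQUI} converts the triviality condition into
\[
a_i = \dot x_i h_i + \alpha(x_i,y_i,p_i), \qquad c_i = \dot p_i h_i + \gamma(x_i,y_i,p_i),
\]
for $i = 1,\ldots,r$, the $b_i$-equation being then automatic.

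The $h_i$-contribution sweeps out exactly $\mathfrak{m}_{\bar{\C}}\dot{\sigma}$, matching the first piece in the denominator of $M^f_\sigma$. Expanding $\alpha = \sum_k \alpha_k p^k$ and applying Remark~\ref{R:BETAZERO}, the contributions of $(\alpha,\beta_0)$ to $(a,c)$ decompose into three independent families: taking $\beta_0$ alone (with all $\alpha_k = 0$) yields $(0,\, (\partial_x\beta_0 + \partial_y\beta_0\,\mathbf{p})\partial/\partial p)$; taking $\alpha_0$ alone (with $\beta_0 = 0$) produces $(\alpha_0\partial/\partial x,\, -(\partial_x\alpha_0 + \partial_y\alpha_0\,\mathbf{p})\mathbf{p}\,\partial/\partial p)$ since $\gamma_1 = -\partial_x\alpha_0$ and $\gamma_2 = -\partial_y\alpha_0$; taking a single $\alpha_k$, $k \geq 1$, produces $(\alpha_k\mathbf{p}^k\partial/\partial x,\, -\tfrac{1}{k+1}(\partial_x\alpha_k\,\mathbf{p}^{k+1} + \partial_y\alpha_k\,\mathbf{p}^{k+2})\partial/\partial p)$, using $\gamma_{k+1} = -\tfrac{1}{k+1}\partial_x\alpha_k$ and $\gamma_{k+2} = -\tfrac{1}{k+1}\partial_y\alpha_k$. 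These three patterns recognize precisely the three families of generators of $I^f$, establishing the claimed isomorphism.

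The main obstacle is the bookkeeping in this last step: one must verify that the contribution of $(\alpha,\beta_0)$, evaluated along the parametrization, hits $I^f$ exactly---no more and no less. Linearity in $(\alpha_k,\beta_0)$ reduces this to checking each generator family separately, but the fact that the mixing formulas of Remark~\ref{R:BETAZERO} couple $\alpha_{k-1}$ and $\alpha_{k-2}$ into a single $\gamma_k$ must be disentangled. The asymmetry between the $\alpha_0$ generator and the $\alpha_k$ generator for $k \geq 1$, arising from the special role of $\beta_0$ (which absorbs the would-be $\gamma_0, \gamma_1$ from $\alpha_{-1}, \alpha_{-2}$), is what forces the three-family description.
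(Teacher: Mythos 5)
Your proposal is correct and follows essentially the same route as the paper: reduce to the fake plane projection via (\ref{E:FFAKE}) so that $b_i$ is determined by $(a_i,c_i)$, derive the triviality conditions $a_i=\dot x_i h_i+\alpha(x_i,y_i,p_i)$, $c_i=\dot p_i h_i+\gamma(x_i,y_i,p_i)$, and identify the resulting subspace with $\mathfrak{m}_{\bar{\C}}\dot\sigma+I^f$ via Remark~\ref{R:BETAZERO}~(ii). In fact your final bookkeeping, splitting the contribution of $(\alpha,\beta_0)$ into the three generator families of $I^f$ using $\gamma_0=\partial_x\beta_0$, $\gamma_1=\partial_y\beta_0-\partial_x\alpha_0$ and $\gamma_k=-\tfrac1k\partial_x\alpha_{k-1}-\tfrac{1}{k-1}\partial_y\alpha_{k-2}$, makes explicit the step the paper compresses into its final sentence.
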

\begin{proof}
Let $\Psi \in \widehat{\mathcal{D}ef}_\psi \, (T_\e)$ be given by
\[
\Psi_i(t_i,\e)=(X_i,Y_i,P_i)=(x_i + \e a_i,y_i + \e b_i, p_i + \e c_i),
\]
where $a_i,b_i,c_i \in \C\{t_i\}t_i$ and  $Y_i= \int P_i \partial_{t_i} X_i dt_i$, $i=1,\ldots,r$. Hence
\[
 b_i= \int (\dot{x}_ic_i + \dot{a}_ip_i)dt_i, \qquad i=1,\ldots,r.
\]
By (\ref{E:FFAKE}) $\Psi$ is trivial if and only if there an isomorphism $\xi : \bar{\C}\times T_\e \to \bar{\C}\times T_\e$ given by
\[
t_i \to \widetilde{t}_i=t_i + \e h_i, \qquad h_i \in \C\{t_i\}t_i, \; i=1,\ldots,r,
\]
and a relative contact transformation $\chi : \C^3 \times T_\e \to \C^3 \times T_\e$ given by
\[
(x,y,p,\e) \mapsto (x+\e \alpha,y + \e \beta, p+ \e \gamma,\e)
\]
such that 
\begin{align*}
X_i&=x_i(\widetilde{t}_i)+\e \alpha(x_i(\widetilde{t}_i),y_i(\widetilde{t}_i),p_i(\widetilde{t}_i)),\\
P_i&=p_i(\widetilde{t}_i)+\e \gamma(x_i(\widetilde{t}_i),y_i(\widetilde{t}_i),p_i(\widetilde{t}_i)),
\end{align*}
$i=1,\ldots,r$.
Following the argument of the proof of Theorem~\ref{T:DEFINFEQUI}, $\Psi^{\pi_f}$ is trivial if and only if
\begin{align*}
a_i(t_i)&=\dot{x_i}(t_i)h_i(t_i) + \alpha(x_i(t_i),y_i(t_i),p_i(t_i)), \\
c_i(t_i)&=\dot{p_i}(t_i)h_i(t_i) + \gamma(x_i(t_i),y_i(t_i),p_i(t_i)),
\end{align*}
$i=1,\ldots,r$. The result follows from Remark~\ref{R:BETAZERO} (ii).
\end{proof}

\begin{lemma}\label{L:FAKEVERSAL}
Let $\psi$ be the parametrization of a Legendrian curve. Let $\Phi$ be the semiuniversal deformation in $\mathcal{D}ef_\sigma$ of the fake plane projection $\sigma$ of $\psi$. Then $\mathcal{C}on_f \, \Phi$ is a versal deformation of $\psi$ in $\widehat{\mathcal{D}ef}_\psi$.
\end{lemma}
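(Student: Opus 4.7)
My plan is to transfer versality from $\Phi$ in $\mathcal{D}ef_\sigma$ to $\mathcal{C}on_f\,\Phi$ in $\widehat{\mathcal{D}ef}_\psi$ via the pair of functors $(\pi_f, \mathcal{C}on_f)$, exploiting the object-level inverse identities (\ref{E:FFAKE}). The data for the versality problem consist of a closed embedding $f: T'' \hookrightarrow T'$ of analytic space germs, a morphism $g: T'' \to \C^\ell$ (where $\C^\ell$ is the base of $\Phi$), an object $\Psi' \in \widehat{\mathcal{D}ef}_\psi(T')$, and an isomorphism $f^\ast \Psi' \cong g^\ast \mathcal{C}on_f\,\Phi$. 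I must produce $h: T' \to \C^\ell$ with $h \circ f = g$ and $h^\ast \mathcal{C}on_f\,\Phi \cong \Psi'$.

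The argument has two steps. First, apply the functor $\pi_f: \widehat{\mathcal{D}ef}_\psi \to \mathcal{D}ef_\sigma$. Since $\pi_f$ is defined fibrewise it commutes with pullback, and by $(\mathcal{C}on_f\,\Phi)^{\pi_f} = \Phi$ from (\ref{E:FFAKE}), the given isomorphism becomes an isomorphism $f^\ast(\Psi'^{\pi_f}) \cong g^\ast \Phi$ in $\mathcal{D}ef_\sigma$. Since $\Phi$ is semiuniversal in $\mathcal{D}ef_\sigma$, in particular versal, there exists $h: T' \to \C^\ell$ extending $g$ such that $h^\ast \Phi \cong \Psi'^{\pi_f}$. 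Second, apply $\mathcal{C}on_f$ to this isomorphism and use $\mathcal{C}on_f(\Psi'^{\pi_f}) = \Psi'$ from (\ref{E:FFAKE}):
\[
h^\ast \mathcal{C}on_f\,\Phi \; = \; \mathcal{C}on_f(h^\ast \Phi) \; \cong \; \mathcal{C}on_f(\Psi'^{\pi_f}) \; = \; \Psi'
\]
in $\widehat{\mathcal{D}ef}_\psi$, as required.

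The main obstacle is not in the chase above, which becomes essentially tautological once $\pi_f$ and $\mathcal{C}on_f$ are accepted as functors compatible with pullback. Rather, it lies in the background verification that $\mathcal{C}on_f$ promotes an arbitrary plane-curve isomorphism $(\chi_\sigma,\xi)$ in $\mathcal{D}ef_\sigma$ to a genuine relative contact transformation on the lifted Legendrian deformations; this lifting is controlled by Theorem~\ref{T:RCKT}, whose singular-base version in turn depends on the Cauchy-Kowalevski variant of Theorem~\ref{T:CKT}. Theorem~\ref{T:FAKETEP} also clarifies why one obtains versality but not semiuniversality: the Kodaira-Spencer map lands in $M^f_\sigma$, and since $\widehat{\mathcal{D}ef}_\psi$ admits additional trivializations via relative contact transformations beyond those in $\mathcal{D}ef_\sigma$, the natural map $M_\sigma \to M^f_\sigma$ need not be bijective, so $\mathcal{C}on_f\,\Phi$ may carry superfluous parameters.
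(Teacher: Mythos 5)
Your proposal is correct and takes essentially the same route as the paper: the paper's own proof just says it ``follows the argument of Lemma~\ref{L:LEMAVERSAL}'', and that argument is exactly your chase --- apply $\pi_f$ to land in $\mathcal{D}ef_\sigma$, invoke versality of the semiuniversal $\Phi$ there, and transfer back through $\mathcal{C}on_f$ using the identities (\ref{E:FFAKE}). The only differences are ones of thoroughness, in your favor: the paper's model argument checks only completeness (the case where $T''$ is the reduced point), while you carry the full versality data $T''\hookrightarrow T'$ through the same chase, and you correctly identify that both versions rest on the background assertion --- stated but not verified in the paper --- that $\pi_f$ and $\mathcal{C}on_f$ act functorially on morphisms, i.e.\ that an isomorphism in $\mathcal{D}ef_\sigma$ can be promoted to a relative contact transformation on the fake conormals.
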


\begin{proof}
It follows the argument of Lemma~\ref{L:LEMAVERSAL}.
\end{proof}

\begin{theorem}
Let $ \mathbf{a}^j, \mathbf{c}^j \in \mathfrak{m}_{\bar{\C}}$ such that
\begin{equation}\label{E:GBASIS}
\mathbf{a}^j \frac{\partial}{\partial x} + \mathbf{c}^j \frac{\partial}{\partial p}= \left[
\begin{matrix} 
a_1^j\\ \vdots \\a_r^j
\end{matrix}
\right]\frac{\partial}{\partial x} +
\left[
\begin{matrix} 
c_1^j\\ \vdots \\c_r^j
\end{matrix}
\right]\frac{\partial}{\partial p},
\end{equation}
$ 1 \leq j \leq \ell$, represents a basis \em[\em a system of generators \em]\em of $M^f_\sigma$. Let $\Phi \in \mathcal{D}ef_\sigma$ be given by
\begin{equation}
 X_i(t_i,{\bf s})= x_i(t_i) + \sum_{j=1}^{\ell} a_i^j(t_i)s_j,\;
 P_i(t_i,{\bf s})= p_i(t_i) + \sum_{j=1}^{\ell} c_i^j(t_i)s_j, 
\end{equation}
$i=1,\ldots,r$. Then $\mathcal{C}on_f \, \Phi$ is a semiuniversal \em[\em versal \em]\em deformation of $\psi$ in $\widehat{\mathcal{D}ef}_\psi$.
\end{theorem}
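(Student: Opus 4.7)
The proof will mirror Theorem~\ref{T:VERSALEQUI} very closely, replacing the equimultiple plane projection by the fake plane projection and Remark~\ref{R:BETAZERO}(i) by Remark~\ref{R:BETAZERO}(ii). By Theorem~\ref{T:FLENNER} combined with Lemma~\ref{L:FAKEVERSAL}, it suffices to show that $\mathcal{C}on_f\,\Phi$ is formally semiuniversal [formally versal]. So I fix a small extension $\imath:T'\hookrightarrow T$, an object $\Psi\in\widehat{\mathcal{D}ef}_\psi(T)$, a morphism $\eta':T'\to\C^\ell$, and an isomorphism $(\chi',\xi'):\eta'^{\ast}\mathcal{C}on_f\,\Phi\to\imath^{\ast}\Psi$. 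The task is to produce $\eta:T\to\C^\ell$ with $\eta\circ\imath=\eta'$, together with $(\chi,\xi)$ extending $(\chi',\xi')$, defining an isomorphism $\eta^{\ast}\mathcal{C}on_f\,\Phi\cong\Psi$.

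Following the strategy of Theorem~\ref{T:VERSALEQUI}, I pass to the ``universal'' Artinian extension $\widetilde{A}=\C\{\mathbf{z},\e\}/(\e^2,\e z_1,\ldots,\e z_m)\twoheadrightarrow\widetilde{A}'=\C\{\mathbf{z}\}$ covering $A\twoheadrightarrow A'$ via a fixed linear section $\sigma:A'\hookrightarrow\widetilde{A}'$. I lift $\Psi$ to $\widetilde{\Psi}\in\widehat{\mathcal{D}ef}_\psi(\widetilde{T})$, set $\widetilde{\chi}'=\chi_{\sigma(\alpha),\sigma(\beta_0)}$ when $\chi'=\chi_{\alpha,\beta_0}$, and let $\widetilde{\xi}'$ be the corresponding lift of $\xi'$. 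Writing $\widetilde{\Psi}$ coordinate-wise as $(U_i,V_i,W_i)=(U'_i+\e u_i,\,V'_i+\e v_i,\,W'_i+\e w_i)$, the Legendrian condition forces $v_i=\int(\dot x_i w_i+\dot u_i p_i)\,dt_i$, so only the $x$- and $p$-components carry independent information. By Theorem~\ref{T:TE} any extension $\widetilde{\chi}=\chi_{\widetilde{\alpha},\widetilde{\beta}_0}$ of $\widetilde{\chi}'$ is determined by the choice of $\alpha\in\C\{x,y,p\}$ and $\beta_0\in(x^2,y)$, and then $\gamma$ is given by (\ref{E:CAUCHYTEGAMMA}).

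Expanding via Taylor in $\e$, using $\e^2=0$ and $\e\mathfrak{m}_{\widetilde{A}}=0$ exactly as in (\ref{E5})--(\ref{E6}), and subtracting the identity (\ref{E3}) at the $\widetilde{T}'$-level, the $x$- and $p$-components of the required equation (\ref{E4}) reduce to
\begin{align*}
u_i(t_i)&=\sum_{j=1}^{\ell}\widetilde{\eta}^0_j\,\tfrac{\partial X_i}{\partial s_j}(t_i,0)-h^0_i(t_i)\dot x_i(t_i)+\alpha(x_i(t_i),y_i(t_i),p_i(t_i)),\\
w_i(t_i)&=\sum_{j=1}^{\ell}\widetilde{\eta}^0_j\,\tfrac{\partial P_i}{\partial s_j}(t_i,0)-h^0_i(t_i)\dot p_i(t_i)+\gamma(x_i(t_i),y_i(t_i),p_i(t_i)),
\end{align*}
the $y$-component being automatic by the Legendrian integral relation. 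By Remark~\ref{R:BETAZERO}(ii), as $\alpha=\sum_k\alpha_k p^k$ and $\beta_0\in(x^2,y)$ vary (with $\alpha_k\in(x,y)$), the pair $\bigl(\alpha(x_i,y_i,p_i),\gamma(x_i,y_i,p_i)\bigr)_i$ traces out exactly the subspace $I^f$, while the terms $h^0_i(\dot x_i,\dot p_i)$ traverse $\mathfrak{m}_{\bar{\C}}\dot\sigma$. Hence solvability of the system above is equivalent to the class of $\mathbf{u}\tfrac{\partial}{\partial x}+\mathbf{w}\tfrac{\partial}{\partial p}$ in $M^f_\sigma$ lying in the span of the classes of $\mathbf{a}^j\tfrac{\partial}{\partial x}+\mathbf{c}^j\tfrac{\partial}{\partial p}$. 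If these form a basis, $\widetilde{\eta}^0$ is uniquely determined, giving formal semiuniversality; if they merely generate, $\widetilde{\eta}^0$ exists but may be non-unique, giving formal versality.

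The main obstacle is bookkeeping: I must verify that the image of $(\alpha,\gamma)$ along $\psi$ is precisely $I^f$ and not something larger or smaller. This relies on the explicit formulas for $\gamma_0,\gamma_1,\gamma_k$ of Remark~\ref{R:BETAZERO}(ii), which forces the three distinct generator families appearing in the definition of $I^f$. A secondary delicate point is checking that no extra obstruction arises from the $v_i$-component; this follows from the identity $v_i=\int(\dot x_iw_i+\dot u_ip_i)dt_i$ together with the fact that the analogous integral identity holds for $X_i,P_i$ and for $\beta=\int(p\partial_x\alpha+\cdots)$ coming from (\ref{E:CAUCHYTEBETA}), so the $y$-equation is redundant given the $x$- and $p$-equations.
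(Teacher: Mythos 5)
Your proposal is correct and takes exactly the paper's approach: the paper's entire proof of this theorem is the single sentence that it ``follows the argument of Theorem~\ref{T:VERSALEQUI}, using Remark~\ref{R:BETAZERO}~(ii)'', which is precisely the adaptation you carry out. Your write-up just makes that explicit — Theorem~\ref{T:FLENNER} plus Lemma~\ref{L:FAKEVERSAL} to reduce to formal (semi)versality, then the small-extension computation with the $x$- and $p$-components replacing the $x$- and $y$-components, the $y$-equation being redundant by the Legendrian integral relation, and Remark~\ref{R:BETAZERO}~(ii) identifying the image of $(\alpha,\gamma)$ along $\psi$ with $I^f$ — so there is nothing to correct.
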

\begin{proof}
It follows the argument of Theorem~\ref{T:VERSALEQUI}, using Remark~\ref{R:BETAZERO} (ii).
\end{proof}


\section{Examples}\label{S8}

\begin{example}
Let $\f (t)=(t^3,t^{10})$, $\psi(t)=(t^3,t^{10},\frac{10}{3}t^7)$, $\sigma(t)=(t^3,\frac{10}{3}t^7)$. The deformations given by
\begin{flalign*}
\bullet \; X(t,\mathbf{s})&= t^3,  & Y(t,\mathbf{s})&= s_1t^4 + s_2t^5 + s_3t^7 + s_4t^8 +  t^{10} + s_5t^{11} + s_6t^{14};
\end{flalign*}
\vspace{-6mm}
\begin{align*}
 \bullet \; X(t,\mathbf{s})&= s_1t + s_2t^2 +t^3,  & Y(t,\mathbf{s})&= s_3t + s_4t^2 + s_5t^4 + s_6t^5 + s_7t^7 + s_8t^8 + \\ 
&&&+ t^{10} + s_9t^{11} + s_{10}t^{14};
\end{align*}
are respectively
\begin{itemize}
\item an equimultiple semiuniversal deformation;
\item a semiuniversal deformation
\end{itemize}
of $\f$.
The conormal of the deformation given by
\begin{align*}
X(t,\mathbf{s})&= t^3,  & Y(t,\mathbf{s})&= s_1t^7 + s_2t^8 +  t^{10} + s_3t^{11};
\end{align*}
is an equimultiple semiuniversal deformation of  $\psi$.
The fake conormal of the deformation given by
\begin{align*}
X(t,\mathbf{s})&= s_1t + s_2t^2 +t^3, & P(t,\mathbf{s})&= s_3t + s_4t^2 + s_5t^4 + s_6t^5 + \frac{10}{3}t^7 + s_7t^8;
\end{align*}
is a semiuniversal deformation of the fake conormal of $\sigma$.
The conormal of the deformation given by
\begin{align*}
X(t,\mathbf{s})&= s_1t + s_2t^2 +t^3, & Y(t,\mathbf{s})&= \alpha_2t^2 + \alpha_3t^3 + \alpha_4t^4 + \alpha_5t^5 + \alpha_6t^6 +\\
&&&+  \alpha_7t^7 + \alpha_8t^8 + \alpha_9t^9 + \alpha_{10}t^{10} + \alpha_{11}t^{11} ;
\end{align*}
with
 \begin{align*}
 \alpha_2&= \frac{s_1s_3}{2}, & \alpha_3&= \frac{s_1s_4 + 2s_2s_3}{3}, & \alpha_4&= \frac{3s_3 + 2s_2s_4}{4}, \\
\alpha_5&= \frac{3s_4 + s_1s_5}{5}, & \alpha_6&= \frac{2s_2s_5 + s_1s_6}{6},   & \alpha_7&= \frac{3s_5 + 2s_2s_6}{7}, \\
\alpha_8&= \frac{10s_1 + 9s_6}{24}, & \alpha_9&= \frac{3s_1s_7 + 20s_2}{27}, & \alpha_{10}&= 1 + \frac{s_2s_7}{5},\\
 \alpha_{11}&= \frac{3s_7}{11},
\end{align*} 
is a semiuniversal deformation of $\psi$.

\end{example}

\begin{example}
Let $Z=\{(x,y) \in \C^2: (y^2-x^5)(y^2-x^7)=0\}$. Consider the parametrization $\f$ of $Z$ given by 
\[
x_1(t_1)=t_1^2, \; y_1(t_1)=t_1^5 \qquad x_2(t_2)=t_2^2, \; y_2(t_2) = t_2^7.
\]
Let $\sigma$ be the fake projection of the conormal of $\f$ given by
\[
x_1(t_1)=t_1^2, \; p_1(t_1)=\frac{5}{2}t_1^3 \qquad x_2(t_2)=t_2^2, \; p_2(t_2) = \frac{7}{2}t_2^5.
\]
The deformations given by
\begin{align*}
\bullet \; X_1(t_1,\mathbf{s})&= t_1^2,  & Y_1(t_1,\mathbf{s})&= s_1t_1^3 + t_1^5,\\
 X_2(t_2,\mathbf{s})&= t_2^2,  & Y_2(t_2,\mathbf{s})&= s_2t_2^2 + s_3t_2^3 + s_4t_2^4 + s_5t_2^5 + s_6t_2^6 + t_2^7 + \\
 &&&+ s_7t_2^8 +  s_8t_2^{10} + s_9t_2^{12};\\ 
 \bullet \; X_1(t_1,\mathbf{s})&= s_1t_1 + t_1^2,  & Y_1(t_1,\mathbf{s})&= s_3t_1 + s_4t_1^3 + t_1^5,\\
 X_2(t_2,\mathbf{s})&= s_2t_2 + t_2^2,  & Y_2(t_2,\mathbf{s})&= s_5t_2 + s_6t_2^2 + s_7t_2^3 + s_8t_2^4 + s_{9}t_2^5 + s_{10}t_2^6 + \\
 &&&+ t_2^7 + s_{11}t_2^8 +  s_{12}t_2^{10} + s_{13}t_2^{12};
\end{align*}
are respectively
\begin{itemize}
\item an equimultiple semiuniversal deformation;
\item a semiuniversal deformation
\end{itemize}
of $\f$. The conormal of the deformation given by
\begin{align*}
X_1(t_1,\mathbf{s})&= t_1^2,  & Y_1(t_1,\mathbf{s})&= t_1^5,\\
 X_2(t_2,\mathbf{s})&= t_2^2,  & Y_2(t_2,\mathbf{s})&= s_1t_2^4 + s_2t_2^5 + s_3t_2^6 + t_2^7 +  s_4t_2^8;
 \end{align*}
is an equimultiple semiuniversal deformation of the conormal of $\f$.
The fake conormal of the deformation given by
\begin{align*}
X_1(t_1,\mathbf{s})&= s_1t_1 + t_1^2,  & P_1(t_1,\mathbf{s})&=s_3t_1 + \frac{5}{2}t_1^3,\\
 X_2(t_2,\mathbf{s})&=s_2t_2 + t_2^2,  & P_2(t_2,\mathbf{s})&= s_4t_2 + s_5t_2^2 + s_6t_2^3 + s_7t_2^4 +  \frac{7}{2}t_2^5 + s_8t_2^6;
 \end{align*}
 is a semiuniversal deformation of the fake conormal of $\sigma$.
 The conormal of the deformation given by
 \begin{align*}
 X_1(t_1,\mathbf{s})&= s_1t_1 + t_1^2,  & Y_1(t_1,\mathbf{s})&= \alpha_2t_1^2 + \alpha_3t_1^3 + \alpha_4t_1^4 + t_1^5,\\
 X_2(t_2,\mathbf{s})&= s_2t_2 + t_2^2,  & Y_2(t_2,\mathbf{s})&= \beta_2t_2^2 + \beta_3t_2^3 + \beta_4t_2^4 + \beta_5t_2^5 + \beta_6t_2^6 + \\
 &&&+ \beta_7t_2^7 + \beta_8t_2^8;
\end{align*}
with
\begin{align*}
\alpha_2&= \frac{s_1s_3}{2}, & \alpha_3&= \frac{2s_3}{3}, & \alpha_4&= \frac{5s_1}{8},\\
\beta_2&= \frac{s_2s_4}{2}, & \beta_3&= \frac{2s_4 + s_2s_5}{3},   & \beta_4&= \frac{2s_5 + s_2s_6}{4}, \\
\beta_5&= \frac{2s_6 + s_2s_7}{5}, & \beta_6&= \frac{4s_7 + 7s_2}{12}, & \beta_7&= 1 + \frac{s_2s_8}{7},\\
 \beta_8&= \frac{2s_8}{8},
\end{align*}
is a semiuniversal deformation of the conormal of $\f$. 

\end{example}


\begin{thebibliography}{9} 
\bibitem{AO} 
A. Araújo and O. Neto, 
\emph{Moduli of Germs of Legendrian Curves}, 
Ann. Fac. Sci. Toulouse Math.,Vol. XVIII, 4, 2009,
 pp.~ 645--657.
\bibitem{JO}
J. Cabral and O. Neto,
\emph{ Microlocal versal deformations of the plane curves $y^k = x^n$}, 
C. R. Acad. Sci. Paris, Ser. I 347 (2009), 
pp.~ 1409--1414.
\bibitem{Greuel-book}
G. -M. Greuel, C. Lossen and E. Shustin,
\emph{Introduction to Singularities and Deformations},
Springer (2007).
\bibitem{Flenner}
H. Flenner,
\emph{Ein Kriterium für die Offenheit der Versalität}, 
Math. Z. 178 (1981),
pp.~449--473.



\end{thebibliography}
\end{document}